\date{}
\newtheorem{theorem}{Theorem}%[section]
\newtheorem{lemma}{Lemma}%[section]
\newtheorem{corollary}{Corollary}%[section]
\newtheorem{proposition}{Proposition}%[section]
\newtheorem{problem}{Problem}
\newtheorem{example}{Example}
\theoremstyle{definition}\newtheorem{remark}{Remark}%[section]
\newcommand{\Aff}{\mathrm{Aff}}
\newcommand{\osc}{\mathrm{osc}}
\newcommand{\IN}{{\mathbb{N}}}
\newcommand{\IQ}{{\mathbb{Q}}}
\newcommand{\IZ}{{\mathbb{Z}}}
\newcommand{\IR}{{\mathbb{R}}}
\newcommand{\IC}{{\mathbb{C}}}
\newcommand{\IT}{{\mathbb{T}}}
\newcommand{\Arg}{{\mathrm{Arg}}}
\newcommand{\Ker}{{\mathrm{Ker}}}
\begin{document}
\title[Oscillator topologies on a paratopological group]
{Oscillator topologies on a paratopological group and related
number invariants}
\author{Taras Banakh}
\author{Olexandr Ravsky}
\email{tbanakh@franko.lviv.ua; oravsky@mail.ru}
%\author{Olexandr Ravsky}
%\email{oravsky@mail.ru}
\address{Department of Mathematics, Ivan Franko Lviv National University,
Universytetska 1, Lviv, 79000, Ukraine}
\keywords{paratopological group, group reflexion,
paratopological SIN-group, paratopological saturated group,
totally bounded paratopological group, countable cellularity,
oscillator, oscillator topology}
\subjclass{22A15, 54H10, 54H11}
\begin{abstract}
We introduce and study oscillator topologies on paratopological
groups and define certain related number invariants. As an
application we prove that a Hausdorff paratopological group $G$
admits a weaker Hausdorff group topology provided $G$ is
3-oscillating. A paratopological group $G$ is 3-oscillating (resp.
2-oscillating) provided for any neighborhood $U$ of the unity $e$
of $G$ there is a neighborhood $V\subset G$ of $e$ such that
$V^{-1}VV^{-1}\subset UU^{-1}U$ (resp. $V^{-1}V\subset UU^{-1}$).
The class of 2-oscillating paratopological groups includes all
collapsing, all nilpotent paratopological groups, all
paratopological groups satisfying a positive law, all
paratopological SIN-group and all saturated paratopological groups
(the latter means that for any nonempty open set $U\subset G$ the
set $U^{-1}$ has nonempty interior). We prove that each totally
bounded paratopological group $G$ is countably cellular; moreover,
every cardinal of uncountable cofinality is a precaliber of $G$.
Also we give an example of a saturated paratopological group which
is not isomorphic to its mirror paratopological group as well as
an example of a 2-oscillating paratopological group whose mirror
paratopological group is not 2-oscillating.
\end{abstract}

\maketitle \baselineskip15pt

This note was motivated by the following question of I.~Guran
\cite{Gu}: {\em Does every (regular) Hausdorff paratopological
group $G$ admit a weaker Hausdorff group topology?}

%All topological spaces consider here are supposed to be Hausdorff.
Under a {\em paratopological group\/} we understand a pair
$(G,\tau)$ consisting of a group $G$ and a topology $\tau$ on $G$
making the group operation $\cdot:G\times G\to G$ of $G$
continuous. If, in addition, the operation $(\cdot)^{-1}:G\to G$
of taking the inverse is continuous with respect to the topology
$\tau$, then $(G,\tau)$ is a {\em topological group\/}. A
paratopological group $G$ is {\em Lawson\/} if $G$ possesses a
neighborhood base at the unit, consisting of subsemigroups of $G$.
Under the {\em mirror paratopological group\/} of a
paratopological group $G=(G,\tau)$ we understand the
paratopological group $G^-=(G,\tau^{-1})$ where
$\tau^{-1}=\{U^{-1}:U\in\tau\}$. Let us mention that there are
paratopological groups which are not isomorphic to their mirror
paratopological groups, see Examples~\ref{ex11} and \ref{ex3}.

Given a paratopological group $G$ let $\tau_\flat$ be the
strongest group topology on $G$, weaker than the topology of $G$.
The topological group $G^\flat=(G,\tau_\flat$), called {\em the
group reflexion\/} of $G$, has the following characteristic
property: the identity map $i:G\to G^\flat$ is continuous and for
every continuous group homomorphism $h:G\to H$ from $G$ into a
topological group $H$ the homomorphism $h\circ i^{-1}:G^\flat\to
H$ is continuous. Our definition of the topology $\tau_\flat$ is
categorial. An inner description of the topology $\tau_\flat$ can
be given using the technique of $T$-filters, see \cite[\S3.1]{PZ}.
A subset $A$ of a paratopological group $G$ will be called {\em
$\flat$-closed\/} (resp. {\em $\flat$-open\/}) if $A$ is closed
(resp. open) in the topology $\tau_\flat$. A paratopological group
$G$ is called {\em $\flat$-separated\/} provided its group
reflexion $G^\flat$ is Hausdorff. We define a paratopological
group $G$ to be {\em $\flat$-regular\/} if each neighborhood $U$
of the unit $e$ of $G$ contains a $\flat$-closed neighborhood of
$e$. Observe that each Hausdorff\/ $\flat$-regular paratopological
group is regular and\/ $\flat$-separated. The latter assertion
follows from the fact that every point $x\not=e$ of $G^\flat$ can
be separated from the unit $e$ by a $\flat$-closed subset. This
implies that the topological group $G^\flat$ is separated and
hence Hausdorff.

Observe that in terms of group reflexions the Guran question can
be reformulated as follows: {\em Is any (regular) Hausdorff
paratopological group $G$ $\flat$-separated?}

The negative answer to this question was given by the second
author in \cite{Ra1} where he has constructed a non-commutative
Hausdorff zero-dimensional paratopological group with
non-Hausdorff group reflexion. In fact, any such a paratopological
group necessarily is non-commutative: according to \cite{Ra1} the
group reflexion $G^\flat$ of any abelian Hausdorff paratopological
group $G$ is Hausdorff. Moreover, in this case the topology of
$G^\flat$ has a very simple description: a base of neighborhoods
at the unit in $G^\flat$ consists of the sets $UU^{-1}$ where $U$
runs over neighborhoods of the unit in the group $G$.  A bit later
it was realized that the same is true for any {\em paratopological
SIN-group\/}, that is a paratopological group $G$ possessing a
neighborhood base $\mathcal B$ at the unit such that $gUg^{-1}=U$
for any $U\in\mathcal B$ and $g\in G$ (as expected, SIN is
abbreviated from {\bf S}mall {\bf I}nvariant {\bf N}eighborhoods).
Unfortunately, Hausdorff paratopological SIN-groups do not exhaust
all paratopological groups whose group reflexion is Hausdorff (for
example any separated topological group has Hausdorff group
reflexion but needs not be a paratopological SIN-group). In this
situation it is natural to search for less restrictive conditions
on a paratopological group $G$ under which the group reflexion
$G^\flat$ of $G$ is Hausdorff and admits a simple description of
its topology. This is important since many results concerning
paratopological groups require their $\flat$-separatedness, see
\cite{KRS}, \cite{BR1}--\cite{BR5}.

For each paratopological group $(G,\tau)$ we define a decreasing
sequence $\tau=\tau_1\supset\tau_2\supset\cdots\supset\tau_\flat$
of so-called oscillator topologies which are intermediate between
the topology $\tau$ of $G$ and the topology $\tau_\flat$ of its
group reflexion. In some fortunate cases the topology $\tau_\flat$
coincides with some oscillator topology $\tau_n$ and thus admits a
relatively simple description.

Given a subset $U$ of a group $G$ by induction define the sets
$(\pm U)^n$ and $(\mp U)^n$, $n\in\omega$, letting $(\pm U)^0=(\mp
U)^0=\{e\}$ and $(\pm U)^{n+1}=U(\mp U)^n$, $(\mp
U)^{n+1}=U^{-1}(\pm U)^n$ for $n\ge 0$. Thus $(\pm
U)^n=\underset{n}{\underbrace{UU^{-1}U\cdots U^{(-1)^{n-1}}}}$ \
and $(\mp U)^n=\underset{n}{\underbrace{U^{-1}UU^{-1}\cdots
U^{(-1)^{n}}}}$. Note that $((\pm U)^n)^{-1}=(\pm U)^n$ if $n$ is
even and $((\pm U)^n)^{-1}=(\mp U)^n$ if $n$ is odd.
%If the set $U$ contains the unit of $G$, then $(\pm U)^{n+1}\supset (\pm
%U)^n\cup (\mp U)^n$ for all $n$.

Under an {\em $n$-oscillator\/} (resp. {\em a mirror
$n$-oscillator\/}) on a topological group $(G,\tau)$ we understand
a set of the form $(\pm U)^n$ (resp. $(\mp U)^n$~) for some
neighborhood $U$ of the unit of $G$. Observe that each
$n$-oscillator in a paratopological group $(G,\tau)$ is a mirror
$n$-oscillator in the mirror paratopological group $(G,\tau^{-1})$
and vice versa: each mirror $n$-oscillator in $(G,\tau)$ is an
$n$-oscillator in $(G,\tau^{-1})$.

Under the {\em $n$-oscillator topology\/} on a paratopological
group $(G,\tau)$ we understand the topology $\tau_n$ consisting of
sets $U\subset G$ such that for each $x\in U$ there is an
$n$-oscillator $(\pm V)^n$ with $x\cdot(\pm V)^n\subset U$.

Since $(\pm V)^{n+1}\supset (\pm V)^n\cup(\mp V)^n$ for each set
$V$ containing the unit of $G$, we get $\tau_{n+1}\subset\tau_n$
and $\tau_{n+1}\subset(\tau^{-1})_n$ for every $n\in\IN$. Thus we
obtain a decreasing sequence $\tau=\tau_1\supset
\tau_2\supset\cdots\supset\tau_\flat$ of oscillator topologies on
the paratopological group $(G,\tau)$ and also a decreasing
sequence
$\tau^{-1}=(\tau^{-1})_1\supset(\tau^{-1})_2\supset\dots\supset\tau^{-1}_\flat=\tau_\flat$
of oscillator topologies on the mirror paratopological group
$(G,\tau^{-1})$. Observe that $(\tau_n)^{-1}=\tau_n$ if $n$ is
even and $(\tau_n)^{-1}=(\tau^{-1})_n$ if $n$ is odd.

In general, $(G,\tau_n)$ is not a paratopological group but it is
a {\em semitopological group\/}, that is a group endowed with a
topology making the group operation separately continuous
(equivalently, a group endowed with a shift-invariant topology).
The following theorem detects the situation when the sequence of
oscillator topologies eventually stabilize.

\begin{theorem}\label{base} For a paratopological group $(G,\tau)$
and a positive integer $n$  the following conditions are
equivalent:
\begin{enumerate}
\item $(G,\tau_n)$ is a topological group;
\item $\tau_n=\tau_\flat$;
\item $\tau_k=\tau_\flat=(\tau^{-1})_{k+1}$ for any $k\ge n$;
\item $\tau_n\subset(\tau^{-1})_n$ which means that for any
neighborhood $U$ of the unit $e$ of $G$ there is a neighborhood
$V\subset G$ of $e$ such that $(\mp V)^n\subset(\pm U)^n$.

\noindent Moreover, if $n$ is even, then the conditions (1)--(4)
are equivalent to
\item $(G,\tau_n)$ is a paratopological group.
\end{enumerate}
\end{theorem}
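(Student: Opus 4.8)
The equivalences $(1)\Leftrightarrow(2)\Leftrightarrow(3)$, the implication $(2)\Rightarrow(4)$ and, for even $n$, $(1)\Leftrightarrow(5)$, are the routine part. The underlying observation is that $\tau_\flat\subseteq\tau_m$ for every $m\in\IN$: given $W\in\tau_\flat$ and $x\in W$ there is, $\tau_\flat$ being a group topology, a $\tau_\flat$-neighbourhood $N$ of $e$ with $(\pm N)^m\subseteq x^{-1}W$, and since $\tau_\flat\subseteq\tau$ the set $(\pm N)^m$ is an $m$-oscillator, so $W\in\tau_m$; using $\tau_\flat=(\tau^{-1})_\flat\subseteq\tau^{-1}$ the same argument gives $\tau_\flat\subseteq(\tau^{-1})_m$. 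Now $(2)\Rightarrow(1)$ is the defining property of $G^\flat$; $(1)\Rightarrow(2)$ because a topological-group topology $\tau_n$ with $\tau_n\subseteq\tau$ must be contained in the largest such topology $\tau_\flat$, while $\tau_\flat\subseteq\tau_n$ by the above. For $(2)\Leftrightarrow(3)$: $(3)\Rightarrow(2)$ by taking $k=n$; and if $\tau_n=\tau_\flat$ then for $k\ge n$ the chains $\tau_\flat\subseteq\tau_k\subseteq\tau_n=\tau_\flat$ and $\tau_\flat\subseteq(\tau^{-1})_{k+1}\subseteq\tau_k=\tau_\flat$ (the middle inclusion being the mirror of $\tau_{k+1}\subseteq(\tau^{-1})_k$) collapse. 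Also $(2)\Rightarrow(4)$ is immediate from $\tau_n=\tau_\flat\subseteq(\tau^{-1})_n$. Finally, for even $n$ we have $(\tau_n)^{-1}=\tau_n$, so $x\mapsto x^{-1}$ is a self-homeomorphism of $(G,\tau_n)$; hence any paratopological-group structure on $(G,\tau_n)$ is already a topological-group structure, giving $(5)\Rightarrow(1)$, while $(1)\Rightarrow(5)$ is trivial. Everything thus reduces to $(4)\Rightarrow(1)$.

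Since $(G,\tau_n)$ is a semitopological group with neighbourhood base $\{(\pm V)^n:V\in\mathcal N_e\}$ at $e$, where $\mathcal N_e$ is the filter of $\tau$-neighbourhoods of $e$, to prove $(4)\Rightarrow(1)$ it suffices to verify the three axioms making $\tau_n$ a group topology:
\[
(G1)\ \ \forall U\ \exists V:\ (\pm V)^n(\pm V)^n\subseteq(\pm U)^n;\qquad (G2)\ \ \forall U\ \exists V:\ \big((\pm V)^n\big)^{-1}\subseteq(\pm U)^n;
\]
\[
(G3)\ \ \forall U\ \forall g\ \exists V:\ g(\pm V)^ng^{-1}\subseteq(\pm U)^n .
\]
Here $(G3)$ holds unconditionally, because $g(\pm V)^ng^{-1}=(\pm gVg^{-1})^n$ and conjugation is continuous; $(G2)$ holds because $\big((\pm V)^n\big)^{-1}$ equals $(\pm V)^n$ for even $n$ (take $V=U$) and equals $(\mp V)^n$ for odd $n$, in which case $(G2)$ is exactly hypothesis $(4)$. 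So the real content of $(4)\Rightarrow(1)$ is to deduce $(G1)$ — joint continuity of the product in $\tau_n$ — from $(4)$. The key tool is the \emph{basic move}: for every paratopological group and every level $k$, the inclusion $\tau_k\subseteq(\tau^{-1})_k$ implies $\tau_k=\tau_{k+1}$. To see this, given $U$ choose by continuity of the product $U'\subseteq U$ with $U'U'\subseteq U$, and then by hypothesis choose $V\subseteq U'$ with $(\mp V)^k\subseteq(\pm U')^k$; then
\[
(\pm V)^{k+1}=V(\mp V)^k\subseteq V(\pm U')^k\subseteq U'(\pm U')^k=(U'U')(\mp U')^{k-1}\subseteq U(\mp U)^{k-1}=(\pm U)^k ,
\]
so $\tau_k\subseteq\tau_{k+1}$, the reverse inclusion being automatic.

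Now we bootstrap the basic move up to $(G1)$, and the argument bifurcates on the parity of $n$. If $n$ is odd, then $(4)$ is in fact equivalent to $(\tau^{-1})_n\subseteq\tau_n$: apply $(\cdot)^{-1}$ to $(\mp V)^n\subseteq(\pm U)^n$ and use that for odd exponents $\big((\mp V)^n\big)^{-1}=(\pm V)^n$ and $\big((\pm U)^n\big)^{-1}=(\mp U)^n$. Hence $\tau_n=(\tau^{-1})_n$, and applying the basic move to both $G$ and its mirror $G^-$ (both $\tau_n\subseteq(\tau^{-1})_n$ and $(\tau^{-1})_n\subseteq\tau_n$ hold) yields $\tau_n=\tau_{n+1}$ and $(\tau^{-1})_n=(\tau^{-1})_{n+1}$, whence $\tau_{n+1}=(\tau^{-1})_{n+1}$; iterating, $\tau_m=\tau_n$ for all $m\ge n$. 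In particular $\tau_n=\tau_{2n+2}$, so there is $V$ with $(\pm V)^{2n+2}\subseteq(\pm U)^n$, and since $n+1$ is even,
\[
(\pm V)^n(\pm V)^n\subseteq(\pm V)^{n+1}(\pm V)^{n+1}=(\pm V)^{2(n+1)}\subseteq(\pm U)^n ,
\]
which is $(G1)$.

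If $n$ is even we cannot pass to the mirror group for free, and this is the main obstacle. The basic move still gives, after inverting its conclusion, that for each $U$ there is $V$ with $(\mp V)^{n+1}\subseteq(\pm U)^n$. Combined with the identity $(\pm V)^m=(\pm V)^{\,m-n-1}(\mp V)^{n+1}$, valid for even $m\ge n+2$ because $m-n-1$ is odd, and with one further use of continuity of the product to absorb the non-alternating block $V^{\pm1}V^{\pm1}$ that appears at the splice, one obtains a \emph{length reduction}: for every even $m\ge n+2$ and every $U$ there is $V$ with $(\pm V)^m\subseteq(\pm U)^{m-2}$. Iterating the length reduction from $m=2n$ down to $m=n$ produces $V$ with $(\pm V)^{2n}\subseteq(\pm U)^n$; and since $n$ is even, $(\pm V)^n(\pm V)^n=(\pm V)^{2n}$, so $(G1)$ holds. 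In either parity $\tau_n$ is therefore a group topology and $(G,\tau_n)$ a topological group, completing $(4)\Rightarrow(1)$. The delicate point throughout is this upgrading of the single stabilization $\tau_n=\tau_{n+1}$ to the full stabilization needed for joint continuity: for odd $n$ it follows from the self-duality of $(4)$, but for even $n$ it genuinely rests on the length-reduction estimate, whose heart is the continuity-of-multiplication handling of the parity-mismatch block $V^{\pm1}V^{\pm1}$.
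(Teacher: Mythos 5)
Your argument is correct in substance, but it takes a genuinely different route through the only nontrivial implication $(4)\Rightarrow(1)$. The paper proves $(4)\Rightarrow(1)$ by verifying Pontriagin's conditions (P1)--(P5) for the family of $n$-oscillators; the heart is (P2), obtained in one stroke by a finite induction $U=U_0\supset U_1\supset\cdots\supset U_n$ which alternately shrinks by continuity of multiplication ($U_kU_k\subset U_{k-1}$ for odd $k$) and by hypothesis (4) ($(\mp U_k)^n\subset(\pm U_{k-1})^n$ for even $k$), after which $(\pm U_n)^n(\pm U_n)^n\subset(\pm U)^n$ is checked directly; the paper also verifies (P3), which makes every oscillator $\tau_n$-open. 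You instead verify the filter-type axioms (G1)--(G3) and derive (G1) from your ``basic move'' (the level-$k$ hypothesis puts a $(k{+}1)$-oscillator inside every $k$-oscillator), combined with the self-duality of (4) for odd $n$ and with the length reduction $(\pm V)^m\subset(\pm U)^{m-2}$ (even $m\ge n+2$) for even $n$. I checked the splice and parity bookkeeping in the basic move, in the identity $(\pm V)^m=(\pm V)^{m-n-1}(\mp V)^{n+1}$, and in the absorption of the non-alternating block, and they are sound; your route is arguably more transparent than the paper's ``easy to see'' step in (P2) and yields the explicit quantitative containments $(\pm V)^{2n}\subset(\pm U)^n$ (resp. $(\pm V)^{2n+2}\subset(\pm U)^n$), at the cost of a parity case split and an iteration in place of the paper's single uniform induction. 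The routine equivalences and the even-$n$ clause (5) are handled exactly as in the paper.

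Two points need an explicit sentence to be airtight. First, you assume at the outset that the oscillators form a $\tau_n$-neighbourhood base at $e$; this is not known in advance (it is precisely what the paper's (P3) secures). It is harmless only because, once (G1)--(G3) hold, the filter generated by the oscillators is the neighbourhood filter at $e$ of a unique group topology whose open sets are, by construction, exactly the sets $U$ such that each $x\in U$ satisfies $x(\pm V)^n\subset U$ for some $V$ --- that is, exactly $\tau_n$; state this identification rather than assuming the base property. Second, in the odd case your iteration is phrased through equalities of topologies ($\tau_{n+1}=(\tau^{-1})_{n+1}$, $\tau_m=\tau_n$), but an equality of topologies does not by itself return a containment of oscillators, since an $n$-oscillator is not yet known to be a $\tau_n$-neighbourhood of $e$. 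The iteration must be run at the level of oscillator containments, and there it does close: by (4), its inverse, and the basic move applied to $G^-$, one has $(\pm U)^{n+1}\supset(\pm U)^n\supset(\mp W)^n\supset(\mp V)^{n+1}$, which is the level-$(n{+}1)$ hypothesis needed to repeat the basic move, and so on up to level $2n+2$. Read this way your proof is complete; as literally written it leans on an unproved transfer from topologies back to oscillators.
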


\begin{proof} If $n$ is even, then $(1)\Leftrightarrow (5)$
 because of the equality $(\tau_n)^{-1}=\tau_n$.

The implication $(1)\Rightarrow (2)$ follows from the inclusions
$\tau\supset\tau_n\supset\tau_\flat$ and the fact that
$\tau_\flat$ is the strongest group topology weaker than $\tau$.

The implication $(2)\Rightarrow (3)$ follows from the inclusions
$\tau_\flat\subset (\tau^{-1})_{k+1}\subset
(\tau^{-1})_{n+1}\subset \tau_n\supset\tau_k\supset\tau_\flat$
holding for each $k\ge n$.

The implication $(3)\Rightarrow (4)$ follows from the inclusion
$\tau_\flat\subset(\tau^{-1})_n$.

Finally, we show that $(4)\Rightarrow (1)$. Let $\mathcal N(e)$ be
a base of open neighborhoods at the unit $e$ of the
paratopological group $G$. Assume that
$\tau_n\subset(\tau^{-1})_n$ which means that for any
$U\in\mathcal N(e)$ there is $V\in\mathcal N(e)$ with $(\mp
V)^n\subset(\pm U)^n$.

To show that $(G,\tau_n)$ is a topological group we shall use the
Pontriagin characterization \cite[\S18]{Po} asserting that a group
$G$ endowed with a shift-invariant topology is a topological group
if and only if the family $\mathcal B$ of open neighborhoods of
the unit $e$ of $G$ satisfies the following five Pontriagin
conditions:

(P1) $(\forall U,V\in\mathcal B)(\exists W\in\mathcal B)$ with
$W\subset U\cap V$;

(P2) $(\forall U\in\mathcal B)(\exists V\in\mathcal B)$ with
$V^2\subset U$;

(P3) $(\forall U\in\mathcal B)(\forall x\in U)(\exists
V\in\mathcal B)$ with $xV\subset U$;

(P4) $(\forall U\in\mathcal B)(\forall x\in G)(\exists
V\in\mathcal B)$ with $x^{-1}Vx\subset U$;

(P5) $(\forall U\in\mathcal B)(\exists V\in\mathcal B)$ with
$V^{-1}\subset U$.

Thus to prove that $(G,\tau_n)$ is a topological group, it
suffices to verify the Pontriagin conditions (P1)--(P5) for the
family $\mathcal B$ of all $n$-oscillators in $G$.

The first condition (P1) is trivial.

To verify (P2), fix any open neighborhood $U\in\mathcal N(e)$ and
by finite induction find open neighborhoods $U_0\supset
U_1\supset\cdots\supset U_n$ of $e$ in $G$ such that $U_0=U$,
$U_k\cdot U_k\subset U_{k-1}$ if $k$ is odd and $(\mp
U_k)^n\subset(\pm U_{k-1})^n$ if $k$ is even. It is easy to see
that $(\pm U_n)^n\cdot(\pm U_n)^n\subset(\pm U_0)^n=(\pm U)^n$ and
thus the Pontriagin condition (P2) is satisfied too.

(P3) Fix any neighborhood $U\in\mathcal N(e)$ and a point $x=(\pm
U)^n$. We have to find $V\in\mathcal N(e)$ such that $x(\pm
V)^n\subset(\pm U)^n$. Write $x=x_1x_2^{-1}x_3\cdots
x_n^{(-1)^{n-1}}$, where all $x_i$ are in $U$. By $A$ denote the
(finite) set of all products in the forms $y_1\cdots y_n$ where
$y_i\in \{e,x_1,x_2,\dots x_n,x_1^{-1},x_2^{-1},\dots, x_n^{-1}\}$
for every $i$. Choose a neighborhood $V\in\mathcal N(e)$ such that
$(xa^{-1}Va)\cup (a^{-1}Vax) \subset U$ for every
$x\in\{x_1,\dots,x_n\}$ and every $a\in A$. Then $$x(\pm
V)^n=x_1x_2^{-1}x_3\cdots x_n^{(-1)^{n-1}}(\pm
V)^n=x_1(x_2^{-1}x_3\cdots x_n^{(-1)^{n-1}}Vx_n^{(-1)^n}\cdots
x_3^{-1}x_2)\times$$ $$\times x_2^{-1}(x_3\cdots
x_n^{(-1)^{n-1}}V^{-1}x_n^{(-1)^n}\cdots x_3^{-1})\cdots
x_n^{(-1)^{n-1}}V^{(-1)^{n-1}} \subset UU^{-1}\cdots
U^{(-1)^{n-1}}=(\pm U)^n$$.

To verify (P4), fix arbitrary $U\in\mathcal N(e)$ and $x\in G$.
Choose a neighborhood $V\in\mathcal N(e)$ such that
$x^{-1}Vx\subset U$. Then $x^{-1}(\pm V)^nx=(\pm
x^{-1}Vx)^n\subset (\pm U)^n$.

To verify (P5), fix any $U\in\mathcal N(e)$. If $n$ is even, then
$((\pm U)^n)^{-1}=(\pm U)^n$. If $n$ is odd, use the assumption
$\tau_n\subset (\tau^{-1})_n$, to find $V\in\mathcal N(e)$ with
$(\mp V)^n\subset (\pm U)^n$. Then $((\pm V)^n)^{-1}= (\mp
V)^n\subset (\pm U)^n$. In any case the condition (P5) holds.

Hence the family $\mathcal B$ of $n$-oscillators in $G$ satisfies
the Pontriagin conditions (P1)--(P5) and since $\mathcal B$ forms
a neighborhood base of the topology $\tau_n$ at the unit of $G$,
$(G,\tau_n)$ is a topological group.
\end{proof}

Next, we consider some separation axioms for the oscillator
topologies. %We remind that a topological space $X$ is $T_1$ is any
%one-point subset of $X$ is closed.
 We remind that a topology $\tau$ on a set $X$ is $T_1$
if for any distinct points $x,y\in X$ there is a neighborhood
$U\in\tau$ of $x$ such that $y\notin U$; $\tau $ is $T_2$ if the
topological space $(X,\tau)$ is Hausdorff.

\begin{theorem}\label{separ} For a paratopological group $(G,\tau)$ and
a positive integer $n$ the following conditions are equivalent:
\begin{enumerate}
\item the topology $\tau_n$ is $T_2$;
\item the topology $\tau_{2n}$ is $T_1$;
\item the topology $\tau_{2n+1}$ is $T_1$;
\item the topology $(\tau^{-1})_{2n+1}$ is $T_1$;
\item the topology $(\tau^{-1})_{2n}$ is $T_1$;
\item the topology $(\tau^{-1})_n$ is $T_2$;
\end{enumerate}
\end{theorem}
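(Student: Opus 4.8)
The plan is to rewrite each of the six conditions as a statement of the form ``$\bigcap_{U}(\text{oscillators of a fixed shape})=\{e\}$'' and then see which of these coincide. Two elementary facts are needed. First, a group carrying a shift-invariant topology with a neighbourhood base $\mathcal B$ at $e$ is $T_1$ iff $\bigcap_{B\in\mathcal B}B=\{e\}$ and is $T_2$ iff $\bigcap_{B\in\mathcal B}BB^{-1}=\{e\}$; and in the Hausdorff case, using invariance under right shifts as well, also $\bigcap_{B\in\mathcal B}B^{-1}B=\{e\}$. Secondly, comparing alternating sign patterns one checks $(\pm U)^{n}\bigl((\pm U)^{n}\bigr)^{-1}=(\pm U)^{2n}$ for all $n$, while $\bigl((\pm U)^{n}\bigr)^{-1}(\pm U)^{n}$ equals $(\pm U)^{2n}$ for even $n$ and $(\mp U)^{2n}$ for odd $n$. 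Since the basic neighbourhoods of $e$ in $\tau_n$ are the $n$-oscillators $(\pm U)^n$ and those in $(\tau^{-1})_n$ are the $(\mp U)^n$, these facts give (1)$\Leftrightarrow$(2)$\Leftrightarrow[\,\bigcap_U(\pm U)^{2n}=\{e\}\,]$, (5)$\Leftrightarrow$(6)$\Leftrightarrow[\,\bigcap_U(\mp U)^{2n}=\{e\}\,]$, (3)$\Leftrightarrow[\,\bigcap_U(\pm U)^{2n+1}=\{e\}\,]$, (4)$\Leftrightarrow[\,\bigcap_U(\mp U)^{2n+1}=\{e\}\,]$; and since $\bigl((\pm U)^{2n+1}\bigr)^{-1}=(\mp U)^{2n+1}$, also (3)$\Leftrightarrow$(4). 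So the theorem amounts to the equivalence of the four displayed vanishing conditions.

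Now the easy half. From $(\pm U)^{m}\subseteq(\pm U)^{m+1}$ one gets $\bigcap_U(\pm U)^{m}\subseteq\bigcap_U(\pm U)^{m+1}$, so vanishing of the longer-oscillator intersection implies vanishing of the shorter; this yields (3)$\Rightarrow$(2) and, on the mirror side, (4)$\Rightarrow$(5). Together with the equivalences already obtained and the $\tau\leftrightarrow\tau^{-1}$ symmetry, the whole theorem follows from the single implication
$$(\star)\qquad \bigcap_U(\pm U)^{2n}=\{e\}\ \Longrightarrow\ \bigcap_U(\pm U)^{2n+1}=\{e\}$$
valid in an arbitrary paratopological group: applied to $\tau$ it gives (2)$\Rightarrow$(3) and applied to $\tau^{-1}$ it gives (5)$\Rightarrow$(4), and these close every loop among (1)--(6). (For \emph{odd} $n$ one can even skip part of this: the ``both-sided'' Hausdorff fact, applied to the base $\{(\pm U)^n\}$, makes $\bigcap_U(\pm U)^{2n}=\{e\}$ and $\bigcap_U(\mp U)^{2n}=\{e\}$ equivalent outright, so only $(\star)$ remains.) Note finally that the hypothesis of $(\star)$ forces $(G,\tau)$, hence also $(G,\tau^{-1})$, to be Hausdorff, because $UU^{-1}=(\pm U)^2\subseteq(\pm U)^{2n}$ and a paratopological group is Hausdorff exactly when its mirror is.

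Proving $(\star)$ is the heart of the matter, and the step I expect to be the real obstacle. Assume $\bigcap_U(\pm U)^{2n}=\{e\}$, fix $x\ne e$, and suppose toward a contradiction that $x\in(\pm V)^{2n+1}=(\pm V)^{2n}\cdot V=V\cdot(\mp V)^{2n}$ for every $V$ in a neighbourhood base at $e$ directed downward. Writing $x=a_Vv_V$ with $a_V\in(\pm V)^{2n}$ and $v_V\in V$ produces a net with $v_V\to e$ in $\tau$ and $a_V\to e$ in $\tau_{2n}$ (each $\tau_{2n}$-neighbourhood of $e$ contains some $(\pm U)^{2n}$); since $a_V=xv_V^{-1}$ and $v_V^{-1}\to e$ in the Hausdorff paratopological group $(G,\tau^{-1})$, also $a_V\to x$ in $\tau^{-1}$. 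Thus a single net converges to $e$ in $\tau_{2n}$ and to $x$ in $\tau^{-1}$, and one wants to conclude $x=e$. The difficulty is that $\tau_{2n}$ and $\tau^{-1}$ are in general incomparable, and the obvious common coarsenings ($\tau_\flat$, or $\tau_{2n}\cap\tau^{-1}$) need not be Hausdorff, so convergence cannot be transported naively. The way out should be to use the joint continuity of multiplication in $\tau$ and the symmetry of the \emph{even} oscillator $(\pm V)^{2n}$ to manufacture, from the two factorizations of $(\pm V)^{2n+1}$ above, a Hausdorff topology in which a suitably built net converges to both $e$ and $x$; the natural candidates are the finite-level oscillator topologies $\tau_m$, for which one checks --- via Theorem~\ref{base} or a direct Pontriagin-style argument --- that $T_1$-ness of $\tau_{2n}$ already upgrades the relevant $\tau_m$ to $T_2$.

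It should help to have a few auxiliary facts ready for that last step: for open $U\ni e$, every oscillator $(\pm U)^n$ is $\tau$-open (it begins with the open factor $U$), every \emph{even} oscillator $(\pm U)^{2n}$ is in addition $\tau^{-1}$-open and symmetric, and $\overline{W}^{\,\tau}\subseteq WW^{-1}$ (more precisely $\overline{A}^{\,\tau}\subseteq AW^{-1}$) for each open $W\ni e$; and $(\pm U)^{2n+1}$ factors as $(\pm U)^{p}(\mp U)^{q}$ with $\{p,q\}=\{n,n+1\}$, $p$ odd, $q$ even, which splits $x$ into a factor in a $\tau_p$-ball and a factor in a $(\tau^{-1})_q$-ball, both of these topologies being already $T_1$ under the hypothesis of $(\star)$. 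Once $(\star)$ is established the reductions of the first two paragraphs finish the proof; the case $n=1$ --- that $\bigcap_U UU^{-1}U=\{e\}$ in every Hausdorff paratopological group --- is already representative of the obstacle.
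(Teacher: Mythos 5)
Your reduction of the theorem to the single implication $(\star)$ is correct and is essentially the same skeleton as the paper's proof: the paper gets $(1)\Leftrightarrow(2)$ from the identity $(\pm U)^n\bigl((\pm U)^n\bigr)^{-1}=(\pm U)^{2n}$ exactly as you do, gets $(3)\Leftrightarrow(4)$ from $(\tau_{2n+1})^{-1}=(\tau^{-1})_{2n+1}$, obtains $(4)\Leftrightarrow(5)\Leftrightarrow(6)$ by passing to the mirror group, and the trivial inclusions $(\pm U)^{m}\subset(\pm U)^{m+1}$ handle the downward implications. So everything hinges on $(\star)$, i.e.\ on $(2)\Rightarrow(3)$ --- and that is precisely the step you do not prove. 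Your net argument stalls exactly where you admit it does: a net converging to $e$ in $\tau_{2n}$ and to $x$ in $\tau^{-1}$ yields nothing, because no Hausdorff topology coarser than both is available, and the auxiliary facts you list at the end do not assemble into an argument. As written, the proposal is an accurate reduction plus a conjecture.

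The missing idea is a local absorption trick, not a convergence argument. Given $x\ne e$, choose $U$ with $x\notin(\pm U)^{2n}$, then $V$ with $V\cdot V\subset U$, and then --- this is the key move --- shrink to $W\subset V$ with $Wx^{-1}\subset x^{-1}V$, i.e.\ $xWx^{-1}\subset V$; this is possible because conjugation by the \emph{fixed} element $x$ is $\tau$-continuous at $e$, using only joint continuity of multiplication. Inverting gives $xW^{-1}\subset V^{-1}x$. Now if $x\in(\pm W)^{2n+1}=(\pm W)^{2n}W$, then $xW^{-1}$ meets $(\pm W)^{2n}$, hence so does $V^{-1}x$, whence $x\in V(\pm W)^{2n}\subset V(\pm V)^{2n}\subset(\pm U)^{2n}$ (the leading $VV$ is swallowed by the first factor $U$ of the $2n$-oscillator), contradicting the choice of $U$. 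In words: the stray odd factor $W$ on the right is conjugated across $x$ into a $V^{-1}$ on the left, where the even oscillator can absorb it. This is a short computation with neighborhoods; no nets, no appeal to $\tau_\flat$ or to Theorem~\ref{base} is needed. With this inserted in place of your third paragraph, your proof is complete and coincides with the paper's.
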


\begin{proof}  Let $\mathcal N(e)$ be a neighborhood base at the unit
$e$ of $G$ and $n$ be a positive integer.

$(1)\Rightarrow (2)$ Assume that the topology $\tau_n$ is
Hausdorff.
 This means that for any $x\ne e$ there is a neighborhood
$U\in \mathcal N(e)$ with $x(\pm U)^n\cap(\pm U)^n=\emptyset$.
Then $x\notin (\pm U)^n\cdot((\pm U)^n)^{-1}=(\pm U)^{2n}$ and
thus $\bigcap_{U\in\mathcal N(e)}(\pm U)^{2n}=\{e\}$, i.e., the
topology $\tau_{2n}$ is $T_1$.

$(2)\Rightarrow (3)$ Suppose the topology $\tau_{2n}$ is $T_1$.
Then $\bigcap_{U\in\mathcal N(e)}(\pm U)^{2n}=\{e\}$. To show that
the topology $\tau_{2n+1}$ is $T_1$ we have to verify that
$\bigcap_{U\in\mathcal N(e)}(\pm U)^{2n+1}=\{e\}$.

It suffices for each $x\ne e$ to find a neighborhood $W\in\mathcal
N(e)$ with $x\notin (\pm W)^{2n+1}$. Since the topology
$\tau_{2n}$ is $T_1$, there is $U\in\mathcal N(e)$ such that
$x\notin (\pm U)^{2n}$. Let $V,W\in\mathcal N(e)$ be such that
$V\cdot V\subset U$ and $W\subset V$, $Wx^{-1}\subset x^{-1}V$.
Then $xW^{-1}\subset V^{-1}x$. We claim that $x\notin (\pm
W)^{2n+1}$. Assuming the converse we would get $x\in (\pm
W)^{2n+1}=(\pm W)^{2n}W$ and consequently, $xW^{-1}\cap (\pm
W)^{2n}\ne\emptyset$. Since $xW^{-1}\subset V^{-1}x$, we get
$V^{-1}x\cap(\pm W)^{2n}\ne\emptyset$ and thus $x\in V(\pm
W)^{2n}\subset V(\pm V)^{2n}\subset (\pm U)^{2n}$, which
contradicts to the choice of the neighborhood $U$.

$(3)\Rightarrow (1)$ If the topology $\tau_{2n+1}$ is $T_1$, then
so is the topology $\tau_{2n}\supset\tau_{2n+1}$. Consequently,
for any distinct points $x,y\in G$ there is $U\in\mathcal N(e)$
such that $x^{-1}y\notin (\pm U)^{2n}=(\pm U)^n\cdot((\pm
U)^n)^{-1}$. Then $y\notin x(\pm U)^n\cdot((\pm U)^n)^{-1}$ and
consequently, $y(\pm U)^n\cap x(\pm U)^n=\emptyset$, i.e., the
topology $\tau_n$ is Hausdorff.

The equivalence $(3)\Leftrightarrow(4)$ follows from the equality
$(\tau_{2n+1})^{-1}=(\tau^{-1})_{2n+1}$.

Finally the equivalences $(4)\Leftrightarrow(5)\Leftrightarrow(6)$
follow from the equivalences
$(3)\Leftrightarrow(2)\Leftrightarrow(1)$ applied to the mirror
paratopological group $(G,\tau^{-1})$.
\end{proof}

Theorem~\ref{separ} allows us to introduce two number invariants
of paratopological groups reflecting their separatedness
properties.
 Given a paratopological
group $(G,\tau)$ and $i=1,2$ let $$T_i(G)=\sup\{n\in\IN:\mbox{ the
$n$-oscillator topology $\tau_n$ on $G$ is
$T_i$}\}\in\IN\cup\{\infty\}.$$ We assume that $\sup\emptyset=0$.
Thus a paratopological group $G$ is $T_i$ for $i=1,2$ if and only
if $T_i(G)>0$.
%As usual, saying that a topology is $T_2$ we mean that it is Hausdorff.

In terms of the invariants $T_1(G)$, $T_2(G)$, Theorem~\ref{separ}
can be reformulated as follows.

\begin{corollary}~\label{relation} If $G$ is a paratopological group
and $G^-$ is its mirror paratopological group, then
$T_1(G)=T_1(G^-)$, $T_2(G)=T_2(G^-)$, and $T_1(G)=2T_2(G)+1$. In
particular, $T_1(G)\ge 3$ for any Hausdorff paratopological group
$G$.
\end{corollary}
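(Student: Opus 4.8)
The whole corollary will drop out of Theorem~\ref{separ} once the monotonicity built into the oscillator construction is recorded. Since a topology finer than a $T_i$-topology is again $T_i$ and $\tau=\tau_1\supset\tau_2\supset\cdots$, for $i=1,2$ the set $\{n\in\IN:\tau_n\text{ is }T_i\}$ is an initial segment of $\IN$; hence $\tau_n$ is $T_i$ precisely when $1\le n\le T_i(G)$ (with the obvious reading of the endpoint values $0$ and $\infty$). Throughout I would assume $G$ is $T_1$, i.e. $T_1(G)\ge1$; this is automatic when $G$ is Hausdorff, which is the only case used later.

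For the two symmetry identities I would feed the equivalences of Theorem~\ref{separ} for $(G,\tau)$, using that $(\tau^{-1})_n$ is by definition the $n$-oscillator topology of the mirror group $G^-$. The equivalence $(1)\Leftrightarrow(6)$ says that $\tau_n$ is $T_2$ iff the $n$-oscillator topology of $G^-$ is $T_2$, so the sets $\{n:\tau_n\text{ is }T_2\}$ computed in $G$ and in $G^-$ coincide and $T_2(G)=T_2(G^-)$. Similarly $(2)\Leftrightarrow(5)$ and $(3)\Leftrightarrow(4)$ say that $\tau_n$ is $T_1$ iff the $n$-oscillator topology of $G^-$ is $T_1$, for every even $n\ge2$ and every odd $n\ge3$; and the case $n=1$ is the triviality that $\tau$ is $T_1$ iff $\bigcap_{U\in\mathcal N(e)}U=\{e\}$ iff $\bigcap_{U\in\mathcal N(e)}U^{-1}=\{e\}$ iff $\tau^{-1}$ is $T_1$ (inversion being a bijection of $G$ fixing $e$). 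Hence $T_1(G)=T_1(G^-)$; alternatively this follows from $T_2(G)=T_2(G^-)$ together with the formula below.

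The substance is the identity $T_1(G)=2T_2(G)+1$, which I would extract from the chain $(1)\Leftrightarrow(2)\Leftrightarrow(3)$ of Theorem~\ref{separ}: for every positive integer $n$, ``$\tau_n$ is $T_2$'', ``$\tau_{2n}$ is $T_1$'', and ``$\tau_{2n+1}$ is $T_1$'' are equivalent. In terms of the initial segments this reads, for all $n\in\IN$, $n\le T_2(G)\Leftrightarrow 2n\le T_1(G)\Leftrightarrow 2n+1\le T_1(G)$. Then I argue by cases. If $T_2(G)=\infty$, the last equivalence gives $2n+1\le T_1(G)$ for all $n$, so $T_1(G)=\infty$. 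If $T_2(G)=b$ is a positive integer, then putting $n=b$ gives $2b+1\le T_1(G)$, while putting $n=b+1$ gives $2(b+1)\not\le T_1(G)$, i.e. $T_1(G)\le 2b+1$, so $T_1(G)=2b+1$. If $T_2(G)=0$, then $n=1$ gives $2\not\le T_1(G)$, so $T_1(G)\le1$, and with $T_1(G)\ge1$ we conclude $T_1(G)=1=2\cdot0+1$. Finally, if $G$ is Hausdorff then $\tau=\tau_1$ is $T_2$, so $T_2(G)\ge1$, whence $T_1(G)=2T_2(G)+1\ge3$.

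Nothing here is deep once Theorem~\ref{separ} is available; the only thing to be careful about is the parity bookkeeping, ensuring the answer is $2T_2(G)+1$ and not $2T_2(G)+2$ — this is exactly where the equivalence $(1)\Leftrightarrow(2)$ is used, not just $(1)\Leftrightarrow(3)$ — and treating the endpoint values $0$ and $\infty$ of $T_i(G)$ consistently.
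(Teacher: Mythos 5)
Your proof is correct and follows essentially the same route as the paper, which offers no separate argument but presents the corollary as a direct reformulation of Theorem~\ref{separ}; your elaboration via the initial-segment observation and the case analysis on $T_2(G)\in\{0\}\cup\IN\cup\{\infty\}$ is exactly the intended bookkeeping. Your explicit caveat that $T_1(G)\ge 1$ is needed for the identity $T_1(G)=2T_2(G)+1$ (it fails, e.g., for the antidiscrete topology) is in fact slightly more careful than the paper's statement and is harmless, since the Hausdorff case is what is used.
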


In \cite{Ra1} the second author has constructed a regular
zero-dimensional paratopological group $G$ with $T_1(G)=3$ and
$T_2(G)=1$. This shows that the lower estimation in the above
corollary cannot be improved. Below we use the idea of \cite{Ra1}
to construct a paratopological group $G$ with $T_2(G)=n$ for any
given $n\in\IN$.

Let $F$ be a free semigroup over a set $X$. A word $w=y_1\cdots
y_n\in F$, $y_i\in X$, is {\em reduced\/} if there is no pair
$y_iy_{i+1}$ such that $y_i^{-1}=y_{i+1}$. A reduced word is {\it
cyclic reduced} if $y_1^{-1}\not=y_n$.

\begin{lemma}\label{Le6.1}{\cite [Theorem 5.5]{LS}} Let $G$ be a group
generated by an alphabet $A=\{a,b,c,\dots\}$ with a relation
$r^p=1$ where $r$ is cyclic reduced and $p>1$. Let $w$ be a
nonempty reduced word in the alphabet $A$ such that $w$ is equal
to the unit of the group $G$. Then there exists a subword $s$ of
the word $w$ which also is a subword of the word $r^{p}$ or
$r^{-p}$ such that $l(s)>(p-1)l(r^p)/p$, where $l(s)$ and $l(r^p)$
denote the lengths of the words $s$ and $r^p$ respectively.
\end{lemma}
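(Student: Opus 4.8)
This is Newman's Spelling Theorem for one-relator groups with torsion, and the natural proof runs through van Kampen diagrams, making essential use of the fact that the relator is a proper power. I only outline the line of argument; the full details are carried out in \cite[Theorem 5.5]{LS} (following Newman), and we use the lemma as a black box.

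First I would reduce to the case that $w$ is cyclically reduced. Writing $w=zw'z^{-1}$ in the free group with $z$ of maximal length and no cancellation at the two junctures, one checks that $w'$ is cyclically reduced, nonempty (as $w$ is), a literal subword of $w$, and still equal to the unit of $G=\langle A\mid r^p\rangle$; a subword $s$ of $w'$ with the required properties then also works for $w$. Assuming $w$ cyclically reduced and nontrivial (which it is, being a nonempty reduced word), van Kampen's lemma supplies a reduced, simply connected planar diagram $D$ over the presentation whose boundary cycle reads $w$. Since $w$ is not freely trivial, $D$ has at least one region, and every region's boundary label is a cyclic conjugate of $r^{\pm p}$, a reduced word of length $N:=p\,l(r)$.

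Next, pieces are short. Since $r$ is cyclically reduced, each cyclic conjugate of $r^{\pm p}$ is itself a $p$-th power $(r')^p$, where $r'$ is the corresponding cyclic conjugate of $r^{\pm1}$ and is precisely the length-$l(r)$ prefix of $(r')^p$. Hence if two distinct relators $(r_1)^p$, $(r_2)^p$ of the symmetrized set shared an initial segment of length $\ge l(r)$, then $r_1=r_2$, a contradiction; so every piece has length $<l(r)=N/p$, i.e. the symmetrized set satisfies the small-cancellation condition $C'(1/p)$. By itself this does not suffice: for a $C'(\lambda)$ presentation the usual Greendlinger lemma only produces a region whose exterior boundary arc exceeds $(1-3\lambda)N$, which for $\lambda=1/p$ equals $(p-3)\,l(r)$ --- short of the required $(p-1)\,l(r)$. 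The extra strength has to come from the periodic power structure of the relator.

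So the heart of the matter is a combined combinatorial Gauss--Bonnet (Euler-characteristic) count and periodicity argument for $D$. Every arc along which two distinct regions meet, and every arc a single region's boundary identifies with itself, reads a piece, hence has length $<l(r)$; one wants to find a region $\Pi$ whose boundary meets the interior of $D$ in arcs of total length $<l(r)$, for then $\partial\Pi$ contributes to $\partial D$ an arc $s$ with $l(s)>N-l(r)=(p-1)\,l(r)=\frac{p-1}{p}\,l(r^p)$, and reading $\partial\Pi$ from $s$ exhibits the required subword of $r^{\pm p}$. Such a $\Pi$ is forced by the Euler count together with the observation that, in a reduced diagram, the periodicity of the boundary labels $r^{\pm p}$ (invariance under the shift by $l(r)$) prevents a region from spending a whole period of its boundary on interior arcs without creating a cancelling pair of regions. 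Turning this into a rigorous global argument on the diagram is the main obstacle, and is precisely where the naive $C'(1/p)$ estimate falls short; I refer to \cite[Theorem 5.5]{LS} for these details.
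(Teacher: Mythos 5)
The paper offers no proof of this lemma at all: it is quoted verbatim as Newman's Spelling Theorem with the citation \cite[Theorem 5.5]{LS}, which is exactly how you treat it. Your sketch of the van Kampen diagram / small-cancellation-plus-periodicity route is extra commentary beyond anything in the paper, but since you, like the authors, ultimately invoke \cite[Theorem 5.5]{LS} as a black box, your treatment coincides with the paper's.
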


Under {\em the normal closure\/} of a subset $A$ of a group $G$ we
understand the smallest normal subgroup of $G$ containing the set
$A$.

\begin{lemma}\label{Co6.1} Let $F^2$ be a free group over a
two-point set $\{x,y\}$, $p>1$ be integer and $N$ be the normal
closure of the element $r^p=(xy^{-1})^p$ in $F^2$. Let $S\subset
G$ be a semigroup generated by the elements $x$ an $y$. Then $(\pm
S)^{2p-2}\cap N=\{e\}$.
\end{lemma}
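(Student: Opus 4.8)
The plan is to understand concretely what the elements of $(\pm S)^{2p-2} \cap N$ look like and then apply Lemma~\ref{Le6.1}. Recall $(\pm S)^{2p-2} = SS^{-1}SS^{-1}\cdots S^{-1}$ (with $2p-2$ alternating factors), where $S$ is the semigroup generated by $x$ and $y$. So a typical element $g \in (\pm S)^{2p-2}$ can be written as $g = s_1 s_2^{-1} s_3 s_4^{-1} \cdots s_{2p-2}^{-1}$, where each $s_i$ is a nonempty positive word in $x$ and $y$ (i.e., a word using only the letters $x,y$, no inverses). Note that $S^{-1}$ consists of nonempty words in $x^{-1}, y^{-1}$. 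Thus $g$, as a word in $F^2$, is a concatenation of $2p-2$ blocks, alternately ``all-positive'' and ``all-negative''; when we freely reduce this word inside $F^2$, cancellation can only occur at the boundary between consecutive blocks, and the cancelling letters at each such boundary are either both of the form $\{x^{\pm1}\}$ or both $\{y^{\pm1}\}$.

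**Bounding the length of the reduced word and invoking Lemma~\ref{Le6.1}.**

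Suppose toward a contradiction that some $g \in (\pm S)^{2p-2} \cap N$ has $g \ne e$. Then $g$ is represented by a nonempty reduced word $w$ in the alphabet $A = \{x,y\}$, and $w$ equals the unit in $G = F^2/N$, the group with the single relation $r^p = (xy^{-1})^p = 1$ (note $r = xy^{-1}$ is cyclic reduced since $x^{-1}\ne y^{-1}$, and $p>1$). By Lemma~\ref{Le6.1}, $w$ contains a subword $t$ which is also a subword of $r^p$ or $r^{-p}$ with $l(t) > (p-1)l(r^p)/p = (p-1)\cdot 2p / p = 2(p-1) = 2p-2$, i.e.\ $l(t) \ge 2p-1$. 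The word $r^p = xy^{-1}xy^{-1}\cdots xy^{-1}$ and its inverse $r^{-p} = yx^{-1}yx^{-1}\cdots yx^{-1}$ are strictly alternating between a positive letter and a negative letter, with consecutive letters always switching between the $x$-type and $y$-type. So a subword $t$ of length $\ge 2p-1$ forces the reduced word $w$ to contain a run of $\ge 2p-1$ consecutive letters that strictly alternates in sign (positive/negative) \emph{and} strictly alternates between $x$-type and $y$-type.

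**Deriving the contradiction from the block structure.**

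The key point is that such a long strictly-alternating run cannot occur inside the reduced form of $g = s_1 s_2^{-1} s_3 \cdots s_{2p-2}^{-1}$. Within a single positive block $s_{2i-1}$, all letters are positive, so there is no sign change at all; similarly within a negative block there is no sign change. Hence every sign change in $w$ must come from crossing a block boundary, and there are only $2p-3$ block boundaries. But a subword of $r^{\pm p}$ of length $l(t)$ contains $l(t)-1$ sign changes, so we need $l(t) - 1 \le 2p-3$, i.e.\ $l(t) \le 2p-2$, contradicting $l(t) \ge 2p-1$. (One must check the boundary bookkeeping carefully: after free reduction at a boundary, the remaining adjacent letters straddling that boundary still have opposite signs precisely when there is genuine sign alternation there, and distinct boundaries of $w$ can collapse together only by deleting letters, which can only decrease the number of available sign changes — so the count $2p-3$ is an upper bound on the number of sign changes in $w$.) This contradiction shows $g = e$, completing the proof that $(\pm S)^{2p-2}\cap N = \{e\}$.

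**Main obstacle.** The delicate part is the last step: carefully justifying that the number of sign changes in the \emph{freely reduced} word $w$ representing $g$ is at most the number of block boundaries, $2p-3$. Free reduction at a boundary between a positive block and a negative block removes a matched pair $z z^{-1}$ (with $z$ an $x$- or $y$-letter) and may expose a new matched pair behind it; one has to argue that this process, and the possible merging of several original blocks into a single reduced run, never \emph{creates} new sign alternations beyond those already accounted for by the $2p-3$ boundaries. Organizing this as an induction on the total length $\sum l(s_i)$, or by tracking the positions of surviving ``boundary witnesses,'' should make it go through cleanly; the comparison $l(t)-1 \le 2p-3 < 2p-2 < l(t)$ then finishes it.
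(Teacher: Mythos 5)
Your proof is correct and follows essentially the same route as the paper: apply Lemma~\ref{Le6.1} to a nontrivial reduced word $w\in(\pm S)^{2p-2}\cap N$ to obtain a subword of $r^{\pm p}$ of length $>2p-2$, and then rule this out because the reduced form of an element of $(\pm S)^{2p-2}$, being a concatenation of at most $2p-2$ sign-constant blocks subjected only to letter deletions, cannot contain so long a sign-alternating subword. Your explicit sign-change count is exactly the content the paper leaves implicit in the phrase ``$s\not\in(\pm S)^{2p-2}$, which is impossible.''
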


\begin{proof} Let $w\in (\pm S)^{2p-2}\cap N$ be a non trivial reduced word.
Then Lemma \ref{Le6.1} implies that $w$ must contain a subword $s$
of length $>2p-2$ such that $s\not\in (\pm S)^{2p-2}$, which is
impossible.
\end{proof}

\begin{example}\label{ex0} For every positive integer $p$ there
exists a Lawson regular countable paratopological group $G$ with
$T_2(G)=p-1$ and $T_1(G)=2p-1$.
\end{example}

\begin{proof} Fix any positive integer $p$. For every $n\in\IN$,
let $F^2_n$ be a free group over a two-point set $\{x_n,y_n\}$.
Denote by $H=\oplus_{n=1}^\infty F^2_n$ the direct sum of the
groups $F^2_n$. Let $S_n\subset F^2_n$ be the semigroup generated
by the elements $x_n$ and $y_n$. Denote the direct sum
$\oplus_{m\ge n} S_m$ by $U_n$.

We show that the family $\{U_n:n\in\IN\}$ satisfies the Pontriagin
conditions (P1)--(P4) formulated in the proof of
Theorem~\ref{base}. The condition (P1) is satisfied because
$U_n\cap U_m\supset U_{\max(m,n)}$; (P2) and (P3) hold since $U_n$
are semigroups. To show that (P4) holds fix arbitrary $n$ and
$w\in G$. Find a number $m$ such that $w\in\oplus_{i=1}^m F^2_i$.
Then $w^{-1}U_{\max(m,n)+1}w= U_{\max(m,n)+1} \subset U_n$ and
thus the condition (P4) holds too. According to \cite{Ra1},
$\{U_n\}_{n\in\IN}$ is a neighborhood base at the unit of some
(not necessarily Hausdorff) paratopology on the group $H$.

Let $F_n$ denote the quotient group of the group $F^2_n$ by the
relation $r^p_n=(x_ny_n^{-1})^p$ and $\phi_n:F^2_n\to F_n$ be the
canonical homomorphism with $N_n=\ker\phi_n$. Let
$\psi_n:F^2_n\to\IZ$ be a unique homomorphism such that
$\psi_n(x_n)=1$ and $\psi_n(y_n)=0$. Define a map
$\psi:H\to\IZ\times\prod F_n$ as follows. Given $w=w_1\cdots
w_n\in H$ where $w_i\in F^2_i$ let
$\psi(w)=(\sum\psi_i(w_i),\prod\phi_i(w_i))$. Let $G=\psi(H)$ and
$\tau$ be the quotient paratopology on the group $G$, see
\cite{Ra1}. By definition, a base of this paratopology consists of
the sets $\psi(U_n)$, $n\in\IN$.

We claim that the $(p-1)$-oscillator topology $\tau_{p-1}$ on $G$
is Hausdorff. According to Theorem~\ref{separ}, it suffices to
show that the $(2p-2)$-oscillator topology $\tau_{2p-2}$ is $T_1$.
Observe that a neighborhood base of this topology consists of the
sets $(\pm \psi(U_n))^{2p-2}=\psi((\pm U_n)^{2p-2})$, $n\in\IN$.

To show that the topology $\tau_{2p-2}$ is $T_1$ it suffices given
an element $w\in H\backslash\ker\psi$ to find $n\in\IN$ with
$\psi(w)\notin\psi((\pm U_n)^{2p-2})$. Since
$H=\oplus_{i=1}^\infty F^2_i$, there is a positive integer $n$
with $w\in\oplus_{i=1}^nF^2_i$.

If $w\notin\oplus_{i=1}^n N_i$, then $\psi(w)\notin \psi((\pm
U_{n+1})^{2p-2})$. Next we consider the case $w\in\oplus_{i=1}^n
N_i$. We claim that $\psi(w)\notin \psi((\pm U_1)^{2p-2})$.
Assuming the converse we would find an element $s\in(\pm
U_1)^{2p-2}$ such that $\psi(s)=\psi(w)$. Lemma~\ref{Co6.1} yields
$s=e$ and hence $\psi(w)=\psi(s)=\psi(e)=e\not=\psi(w)$, which is
a contradiction. Hence the topology $\tau_{2p-2}$ is $T_1$ and the
topology $\tau_{p-1}$ is Hausdorff by Theorem~\ref{separ}.

Observe that the oscillator topology $\tau_{2p}$ is not $T_1$
since $(\psi(U_n)\psi(U_n)^{-1})^p\ni
(\psi(x_n)\psi(y_n)^{-1})^p=(p,e)$ for every natural $n$. It
follows that $T_1(G)=2p-1$ and $T_2(G)=p$.

Finally we show that $\psi(U_n)$ is a clopen subset of the group
$G$ for every $n$ and hence $\tau$ is a zero-dimensional regular
topology. Let $w\in H$ and $\psi(w)\in\overline{\psi(U_n)}$. Write
$w=w_1\cdots w_m$, where $m\ge n$ and $w_i\in F^2_i$ for $i\le m$.
There exist elements $u\in U_{m+1}, v\in U_n$ such that
$wuv^{-1}\in\ker\psi$. Write $u=u_{m+1}\cdots u_k$, $v=v_n\cdots
v_k$, where $u_i,v_i\in F^2_i$. Then $u_iv_i^{-1}\in N_i$ for
$i\ge m+1$. Since $u_iv_i^{-1}\in S_iS_i^{-1}$ for every $i$,
Lemma~\ref{Co6.1} implies that $u_i=v_i$ for $i\ge m+1$. Therefore
$w\prod\limits_{i=n}^m v_i^{-1}=wuv^{-1}\in\ker\psi$ and
$\psi(w)=\psi(v_n\cdots v_m)\in\psi(U_n)$
\end{proof}

Another number invariant of paratopological groups is suggested by
Theorem~\ref{base} which reflects some symmetry property of
paratopological groups, which will be referred to as {\em
oscillation symmetry}. We shall say that a paratopological group
$G$ has {\em finite oscillation\/} if there is a positive integer
$n$ such that any of the first four equivalent conditions of
Theorem~\ref{base} holds. In particular, $G$ has finite
oscillation if there is a positive integer $n$ such that for any
neighborhood $U$ of the unit in $G$ the set $(\pm U)^n$ is a
neighborhood of $e$ in $G^\flat$. We shall say that a
paratopological group $G$ has {\em countable oscillation\/} if for
any neighborhood $U\subset G$ of $e$ there is a positive integer
$n$ such that $(\pm U)^n$ is neighborhood of the unit in
$G^\flat$.

Next, we define an invariant of paratopological groups related to
the oscillation. This invariant takes value in the set $\mathbb
N\cup\{\omega,\infty\}$ linearly ordered so that
$n<m<\omega<\infty$ for each positive integers $n<m$. For a
paratopological group $G$ with finite oscillation let $\osc(G)$ be
the smallest positive integer $n$ such that for any neighborhood
$U\subset G$ of $e$ the set $(\pm U)^n$ is a neighborhood of $e$
in $G^\flat$. If $G$ has countable oscillation but fails to have
finite oscillation, then we put $\osc(G)=\omega$. If $G$ fails to
have countable oscillation we put $\osc(G)=\infty$. We shall say
that a paratopological group $G$ is {\em $n$-oscillating\/} if
$\osc(G)\le n$.

In particular, $\osc(G)\le 2$ (resp. $\osc(G)\le 3$) means that
the sets $UU^{-1}$ (resp. $UU^{-1}U$) with $U\in\mathcal N(e)$
form a neighborhood base at the unit of the topological group
$G^\flat$. The following Proposition is immediate.

\begin{proposition}\label{group} A paratopological group $G$ is a topological
group if and only if $\osc(G)=1$. \end{proposition}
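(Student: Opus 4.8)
The plan is to unwind both sides of the equivalence to a single statement about topologies, namely that $\osc(G)=1$ holds if and only if $\tau=\tau_\flat$, and then read off the conclusion from the definition of the group reflexion.

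First I would prove the forward implication. Assume $(G,\tau)$ is a topological group. Then $\tau$ is itself a group topology on $G$ satisfying $\tau\subseteq\tau$, so by the maximality property defining $\tau_\flat$ (the strongest group topology weaker than $\tau$) we get $\tau\subseteq\tau_\flat$; combined with the always-valid inclusion $\tau_\flat\subseteq\tau$ this yields $\tau_\flat=\tau$, i.e.\ $G^\flat=G$. Hence for every neighborhood $U$ of $e$ the $1$-oscillator $(\pm U)^1=U$ is a neighborhood of $e$ in $G^\flat$; since $\osc(G)$ is by definition the least positive integer with this property and positive integers are $\ge 1$, we conclude $\osc(G)=1$.

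Conversely, assume $\osc(G)=1$. By the definition of $\osc$ this means precisely that for every $U\in\mathcal N(e)$ the set $(\pm U)^1=U$ is a neighborhood of $e$ in $G^\flat=(G,\tau_\flat)$; equivalently, the identity map $(G,\tau_\flat)\to(G,\tau)$ is continuous, i.e.\ $\tau\subseteq\tau_\flat$. Together with $\tau_\flat\subseteq\tau$ this forces $\tau=\tau_\flat$, so $\tau$ is a group topology and $(G,\tau)$ is a topological group. Alternatively, both implications follow immediately from Theorem~\ref{base} applied with $n=1$: there $\tau_1=\tau$, the equivalence $(1)\Leftrightarrow(2)$ states that $(G,\tau_1)$ is a topological group iff $\tau_1=\tau_\flat$, and $\osc(G)=1$ is simply a restatement of $\tau_1=\tau_\flat$.

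There is essentially no obstacle: the proposition is a definitional unwrapping. The only point demanding a moment's care is the bookkeeping convention behind $\osc$ — one must notice that the $1$-oscillator of a neighborhood $U$ is $U$ itself, so that the least admissible index in the definition of $\osc(G)$ can equal $1$ exactly when the ambient topology already coincides with that of the group reflexion $G^\flat$.
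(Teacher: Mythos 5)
Your proof is correct and is essentially the argument the paper has in mind: the paper states Proposition~\ref{group} without proof as ``immediate,'' and your definitional unwinding (a topological group has $\tau=\tau_\flat$, so every $1$-oscillator $(\pm U)^1=U$ is a $\tau_\flat$-neighborhood of $e$; conversely $\osc(G)=1$ forces $\tau\subseteq\tau_\flat$ and hence $\tau=\tau_\flat$) is exactly that intended argument. The alternative you mention, invoking Theorem~\ref{base} with $n=1$, is the same observation packaged through the equivalence $(1)\Leftrightarrow(2)$ and adds nothing that needs checking.
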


Thus the oscillating number allows us to measure the distance from
a paratopological group to the class of topological groups, i.e.,
paratopological groups with small oscillation in a sense are near
to topological groups.

Next, we introduce a class of 2-oscillating paratopological groups
which contains all topological groups and all paratopological
SIN-groups. A paratopological group $G$ is defined to be a {\em
paratopological LSIN-group\/} if for any neighborhood $U$ of the
unit $e$ of $G$ there is a neighborhood $W\subset G$ of $e$ such
that $g^{-1}Wg\subset U$ for any $g\in W$. It is clear that each
topological group is a paratopological LSIN-group.

A paratopological group $G$ is {\em totally bounded\/} if for any
neighborhood $U$ of the unit $e$ of $G$ there is a finite subset
$F\subset G$ with $G=F\cdot U$. It is well-known that each totally
bounded topological group is a SIN-group. It is interesting to
remark that for paratopological groups it is not so, see
Example~\ref{ex2}.

\begin{proposition} Each paratopological SIN-group is a
paratopological LSIN-group. Conversely, each totally bounded
paratopological LSIN-group is a paratopological SIN-group.
\end{proposition}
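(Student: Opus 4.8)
The first assertion is immediate. If $G$ is a paratopological SIN-group with an invariant neighborhood base $\mathcal B$ at $e$ — so $gVg^{-1}=V$, equivalently $g^{-1}Vg=V$, for all $V\in\mathcal B$ and all $g\in G$ — then for a given neighborhood $U$ of $e$ I would simply pick $W\in\mathcal B$ with $W\subset U$: now $g^{-1}Wg=W\subset U$ holds for every $g\in G$, in particular for every $g\in W$, which is precisely the defining property of an LSIN-group.

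For the converse the plan is to first isolate an elementary reduction: a paratopological group $G$ is a paratopological SIN-group as soon as for every neighborhood $U$ of $e$ there is a neighborhood $W$ of $e$ with $gWg^{-1}\subset U$ for all $g\in G$. Indeed, for such a $W$ the set $V=\bigcup_{g\in G}gWg^{-1}$ contains $W$ and hence is a neighborhood of $e$; it is contained in $U$; and it is conjugation-invariant because $hVh^{-1}=\bigcup_{g\in G}(hg)W(hg)^{-1}=V$. Letting $U$ run over a neighborhood base at $e$ then produces an invariant neighborhood base, so $G$ is a paratopological SIN-group.

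Now assume $G$ is a totally bounded paratopological LSIN-group and fix a neighborhood $U$ of $e$. Using the LSIN property I would choose a neighborhood $W_0$ of $e$ with $x^{-1}W_0x\subset U$ for every $x\in W_0$. Using total boundedness choose a finite $F\subset G$ with $G=FW_0$; applying inversion gives $G=G^{-1}=W_0^{-1}F^{-1}$, so every $g\in G$ factors as $g=vf$ with $f\in F^{-1}$ and $v\in W_0^{-1}$, that is, with $v^{-1}\in W_0$. Since $F^{-1}$ is finite and, for each $a\in F^{-1}$, the map $x\mapsto axa^{-1}$ is continuous (it is a composition of a left and a right translation) and fixes $e$, there is a neighborhood $W\subset W_0$ of $e$ with $aWa^{-1}\subset W_0$ for all $a\in F^{-1}$. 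Then for an arbitrary $g=vf$ as above,
$$gWg^{-1}=v\,(fWf^{-1})\,v^{-1}\subset v\,W_0\,v^{-1}=(v^{-1})^{-1}\,W_0\,(v^{-1})\subset U,$$
the last inclusion by the choice of $W_0$ applied to $x=v^{-1}\in W_0$. Since $U$ was an arbitrary neighborhood of $e$, this verifies the condition of the reduction, so $G$ is a paratopological SIN-group.

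The point that needs care is the mismatch between the two hypotheses: the LSIN condition only bounds conjugates of the form $x^{-1}Yx$ with $x$ close to $e$, whereas the SIN property requires bounding $gWg^{-1}$ for all $g$. Total boundedness repairs this, but only if one expresses $g$ through the decomposition $G=W_0^{-1}F^{-1}$ rather than $G=FW_0$, so that the ``small'' factor $v$ enters with $v^{-1}$ (not $v$ itself) lying in $W_0$; this is exactly what converts $v\,W_0\,v^{-1}$ back into an expression $x^{-1}W_0x$ with $x\in W_0$. The remaining ingredients — continuity of conjugation by a single element in a paratopological group, stability of the filter of neighborhoods of $e$ under finite intersections, and the invariance computation for $V$ — are routine.
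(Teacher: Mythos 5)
Your proof is correct and follows essentially the same route as the paper: use the LSIN neighborhood $W_0$, a finite cover coming from total boundedness, and continuity of conjugation by the finitely many cover elements to shrink to $W$. The only differences are cosmetic -- you bound $gWg^{-1}$ instead of $g^{-1}Wg$, which forces you to invert the cover to $G=W_0^{-1}F^{-1}$ rather than use $G=FW_0$ directly as the paper does, and you spell out the union-of-conjugates reduction to an invariant base, which the paper leaves implicit.
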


\begin{proof} The first statement is trivial. To prove the second
statement, suppose that $G$ is a totally bounded paratopological
LSIN-group. Given a neighborhood $U$ of the unit $e$ in $G$, find
a neighborhood $V\subset G$ of $e$ such that $x^{-1}Vx\subset U$
for all $x\in V$. By the total boundedness of $G$ find a finite
subset $F\subset G$ such that $G=F\cdot V$. By the continuity of
the group operation, find a neighborhood $W\subset G$ of $e$ such
that $f^{-1}Wf\subset V$ for each $f\in F$. We claim that
$g^{-1}Wg\subset U$ for each $g\in G$. Indeed, given arbitrary
$g\in G$, find $f\in F$ and $x\in V$ such that $g=fx$. Then
$g^{-1}Wg=x^{-1}f^{-1}Wfx\subset x^{-1}Vx\subset U$.

Therefore, for any neighborhood $U\subset G$ of $e$ we have found
a neighborhood $W\subset G$ of $e$ such that $g^{-1}Wg\subset U$
for all $g\in G$. Hence $G$ is a
paratopological SIN-group.
\end{proof}

Following I.~Guran we say that a paratopological group $G$ is {\em
saturated\/} if for any neighborhood $U\subset G$ of the unit the
set $U^{-1}$ has nonempty interior in $G$. A standard example of a
saturated paratopological group with discontinuous inverse is the
{\em Sorgenfrey line\/}, i.e., the real line endowed with the
Sorgenfrey topology generated by the base consisting of
half-intervals $[a,b)$, $a<b$. Saturated paratopological groups
seem to be very close to being a topological group (this vague
thesis will be confirmed in the subsequent proposition). Let us
mention that totally bounded paratopological groups are saturated,
see Proposition 2.1 from \cite{Ra2}. The following theorem shows
that quite often we deal with $2$-oscillating paratopological
groups.

\begin{proposition}\label{sat} The class of paratopological 2-oscillating groups
contains all topological groups, all paratopological LSIN-groups
and all saturated paratopological groups.
\end{proposition}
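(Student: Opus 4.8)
The plan is to reduce, via Theorem~\ref{base}, the assertion ``$\osc(G)\le 2$'' to a single concrete inclusion and then verify that inclusion in each of the three classes. Since $(\pm U)^2=UU^{-1}$ and $(\mp V)^2=V^{-1}V$, condition~(4) of Theorem~\ref{base} for $n=2$ reads: for every $U\in\mathcal N(e)$ there is $V\in\mathcal N(e)$ with $V^{-1}V\subset UU^{-1}$. By the equivalences of Theorem~\ref{base} this is the same as $\tau_2=\tau_\flat$, i.e.\ the sets $UU^{-1}$, $U\in\mathcal N(e)$, form a neighborhood base at $e$ in $G^\flat$; that is exactly $\osc(G)\le 2$. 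So it suffices, for each $U\in\mathcal N(e)$ and in each of the three classes, to exhibit $V\in\mathcal N(e)$ with $V^{-1}V\subset UU^{-1}$. For topological groups this is immediate ($\osc(G)=1$ by Proposition~\ref{group}), and in fact that case is absorbed into the LSIN case below.

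For a paratopological LSIN-group: given $U$, I would use the defining property to pick $W\in\mathcal N(e)$ with $g^{-1}Wg\subset U$ for all $g\in W$, and then set $V:=W\cap U$. If $v_1,v_2\in V$, then $v_1,v_2\in W$, so $v_1^{-1}v_2v_1\in U$ (apply the choice of $W$ with $g=v_1$), while $v_1^{-1}\in U^{-1}$ because $v_1\in U$; hence $v_1^{-1}v_2=(v_1^{-1}v_2v_1)v_1^{-1}\in UU^{-1}$, which gives $V^{-1}V\subset UU^{-1}$.

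For a saturated paratopological group: given $U$, I would let $O$ be the interior of $U^{-1}$, which is nonempty by saturation, and fix a point $b\in O$. Put $V:=b^{-1}O$; this is an open neighborhood of $e$, since left translations are homeomorphisms and $e=b^{-1}b\in b^{-1}O$. As $O\subset U^{-1}$ we have $O^{-1}\subset U$, so $V^{-1}V=O^{-1}bb^{-1}O=O^{-1}O\subset UU^{-1}$, as required.

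I expect the reduction and the LSIN computation to be routine; the one step needing genuine care is the saturated case, where the idea is to translate the (merely one-sided) open set $O\subset U^{-1}$ so that it becomes an honest neighborhood of $e$ while keeping $V^{-1}V=O^{-1}O$ inside the ``two-sided'' set $UU^{-1}$. Having treated the three classes, the Proposition then follows from the reduction of the first paragraph.
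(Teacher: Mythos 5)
Your proof is correct, and the reduction via condition~(4) of Theorem~\ref{base} together with the LSIN computation are essentially what the paper does (the paper takes $W\subset U$ with $g^{-1}Wg\subset U$ for $g\in W$ and concludes $W^{-1}W\subset UW^{-1}\subset UU^{-1}$, which is your element-wise argument in set form). Where you genuinely diverge is the saturated case: the paper first shrinks to $V$ with $V^2\subset U$, invokes saturation to get a point $x\in V$ and a neighborhood $W$ with $x^{-1}W\subset V^{-1}$, then shrinks $W$ further so that $x^{-1}W\subset Vx^{-1}$, and chains inclusions to reach $W^{-1}W\subset VVV^{-1}\subset UU^{-1}$. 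You instead take the nonempty interior $O$ of $U^{-1}$, pick $b\in O$, and translate: $V=b^{-1}O$ is an open neighborhood of $e$ (left translations are homeomorphisms in a paratopological group) with $V^{-1}V=O^{-1}O\subset UU^{-1}$, since $O\subset U^{-1}$ and $O^{-1}\subset U$. This is shorter and cleaner: it needs no auxiliary square root $V^2\subset U$ and no second shrinking of $W$, and it produces the required inclusion directly against $U$ itself; the paper's route, by contrast, stays closer to the pointwise form of the saturation condition and is the kind of manipulation reused later (e.g.\ in Theorem~\ref{pibase}). Either way the conclusion $\osc(G)\le 2$ follows exactly as you state.
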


\begin{proof} By Proposition~\ref{group}, the class of paratopological
2-oscillating groups contains all topological groups. To see that
each paratopological LSIN-group $G$ is 2-oscillating fix any
neighborhood $U$ of the unit $e$ in $G$ and find a neighborhood
$W\subset U$ of $e$ such that $g^{-1}Wg\subset U$ for all $g\in
W$. Then $g^{-1}W\subset Ug^{-1}$ for all $g\in W$ and thus
$W^{-1}W\subset UW^{-1}\subset UU^{-1}$. This means that the
paratopological group $G$ is 2-oscillating.

Finally, let us show that each saturated paratopological group
$(G,\tau)$ is 2-oscillating. Fix any neighborhood $U$ of the unit
$e$ in $G$. We have to find a neighborhood $W\subset G$ of $e$
such that $W^{-1}W\subset UU^{-1}$. Find an open neighborhood
$V\subset G$ of $e$ such that $V^2\subset U$. Since $G$ is
saturated, there are a point $x\in V$ and a neighborhood $W\subset
G$ of $e$ such that $x^{-1}W\subset V^{-1}$. Then $W^{-1}x\subset
V$ and $W^{-1}\subset Vx^{-1}$. We can assume that $W$ is so small
that $x^{-1}W\subset Vx^{-1}$. In this case $W^{-1}W\subset
Vx^{-1}W\subset VVx^{-1}\subset VVV^{-1}\subset UU^{-1}$. Hence
$\osc(G)\le 2$.
\end{proof}

Proposition~\ref{sat} gives topological conditions under which a
paratopological group is 2-oscillating. Next we consider some
algebraic conditions yielding the same result.

A group $G$ is defined to be {\em absolutely $n$-oscillating\/} if
any paratopological group algebraically isomorphic to $G$ is
$n$-oscillating. In particular, each abelian group is absolutely
$2$-oscillating and each group of finite exponent is absolutely
1-oscillating.

We shall show that the absolute $n$-oscillation property follows
from another algebraic property called the $n$-reversivity. A
group $G$ is defined to be {\em $(n,m)$-reversive\/} where
$n\in\IN$ and $m\in\IN\cup\{\infty\}$ if $(\mp A)^n\subset (\pm
A^m)^n$ for any subset $A\subset G$ containing the unit $e$ of $G$
(here $A^\infty=\bigcup_{n\in\IN}A^m\subset G$ is the semigroup in
$G$ generated by $A$). A group $G$ is called {\em $n$-reversive\/}
if it is $(n,m)$-reversive for some $m\in\IN$. Observe that each
$n$-reversive group is $(n,\infty)$-reversive and
$(n+1)$-reversive. Note also that a group $G$ is
$(1,\infty)$-reversive (resp. $1$-reversive) if and only if $G$ is
periodic ($G$ is of finite exponent).

Reversive groups were studied in \cite{Ba} where it was shown that
a group $G$ is 2-reversive if and only if $G$ is 3-reversive if
and only if $G$ is collapsing in the sense of \cite{SS}, \cite{S}.
We remind that a group $G$ is {\em collapsing\/} if there are
numbers $n,m\in\IN$ such that $|A^m|<|A|^m$ for any $n$-element
subset $A$ of $G$. Collapsing groups form a wide class of groups,
containing all groups with positive laws, in particular all
virtually nilpotent groups, see \cite{SS}, \cite{S},and \cite{Ma}.
We remind that a group $G$ is {\em virtually nilpotent\/} if $G$
contains a nilpotent subgroup of finite index. According to the
famous Gromov Theorem \cite{Gr}, a finitely generated group is
virtually nilpotent if and only if its has polynomial growth. For
finitely generated solvable groups a more precise characterization
is true: such a group is virtually nilpotent if and only if it
contains no free semigroup with two generators, see \cite{Ro}. It
is interesting to mention that a group $G$ contains no free
semigroup with two generators if and only if $G$ is
$(2,\infty)$-reversive if and only if $G$ is
$(3,\infty)$-reversive, see \cite{Ba}. Moreover, for any
polycyclic group $G$ the following conditions are equivalent: (i)
$G$ is virtually nilpotent, (ii) $G$ has polynomial growth, (iii)
$G$ is collapsing, (iv) $G$ contains no free semigroup with two
generators, (v) $G$ is $n$-reversive for some $n\in\IN$, (vi) $G$
is $(n,\infty)$-reversive for some $n\in\IN$, see \cite{Ba}.

For a group $G$ let $G^\omega_0=\{(g_n)\in G^\omega:g_n$ be the
unit of $G$ for almost all $n\}$ denote the direct sum of
countably many copies of $G$. The following proposition describes
the interplay between $n$-reversive and absolutely $n$-oscillating
groups.

\begin{proposition} \label{oscal} Let $G$ be a group and
$n$ be a positive integer.
\begin{enumerate}
\item If $G$ is $n$-reversive, then it is
absolutely $n$-oscillating.
\item If $G$ is not $n$-reversive, then
the group $G^\omega_0$ is not absolutely $n$-oscillating. More
precisely, $G^\omega_0$ is isomorphic to a first-countable
Hausdorff\/ $\flat$-regular zero-dimensional paratopological group
$H$ with $\osc(H)>n$.
\item If $G$ is isomorphic to $G^\omega_0$, then $G$ is
$n$-reversive if and only if $G$ is absolutely $n$-oscillating.
\end{enumerate}
\end{proposition}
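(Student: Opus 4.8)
The plan is to prove (1) and (2) directly and then deduce (3).

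For (1), assume $G$ is $(n,m)$-reversive for some $m\in\IN$ and let $(G,\tau)$ be an arbitrary paratopological group on $G$. By the equivalence of conditions (1) and (4) in Theorem~\ref{base}, to see that $\osc(G,\tau)\le n$ it suffices to find, for each neighborhood $U$ of $e$, a neighborhood $V$ of $e$ with $(\mp V)^n\subset(\pm U)^n$. Given $U$, continuity of multiplication provides a neighborhood $V\ni e$ with $V^m\subset U$ (the $m$-fold product). Since $e\in V$, $(n,m)$-reversivity applied to $A=V$ gives $(\mp V)^n\subset(\pm V^m)^n$, and the easy monotonicity $B\subset C\Rightarrow(\pm B)^n\subset(\pm C)^n$ yields $(\pm V^m)^n\subset(\pm U)^n$. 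Hence every paratopological group on $G$ is $n$-oscillating, that is, $G$ is absolutely $n$-oscillating.

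For (2), assume $G$ is not $n$-reversive; then $G$ is not $(n,i)$-reversive for any $i\in\IN$, so for each $i$ we fix $A_i\subset G$ with $e\in A_i$ together with an element $g_i\in(\mp A_i)^n\setminus(\pm A_i^i)^n$ (so $g_i\neq e$). Let $H=G^\omega_0=\bigoplus_{i\ge1}G_i$ with $G_i=G$, regard $A_i$ inside $G_i$, and for $k\in\IN$ set
$$U_k=\{w\in H:\ w_i\in A_i^{\lfloor i/k\rfloor}\text{ for every }i\}.$$
Since $e$ lies in every $A_i^j$, $U_k$ is the ``box'' $\big(\prod_iA_i^{\lfloor i/k\rfloor}\big)\cap H$; as products of such boxes are boxes of coordinatewise products and $A_i^j$ grows with $j$, one checks $U_k\cap U_l=U_{\max(k,l)}$, $U_kU_k=\{w\in H:w_i\in A_i^{2\lfloor i/k\rfloor}\text{ for all }i\}$, and $U_{2k}U_{2k}\subset U_k$ (using $2\lfloor i/(2k)\rfloor\le\lfloor i/k\rfloor$), while conditions (P3)--(P4) from the proof of Theorem~\ref{base} hold because every $w\in H$ has finite support and $\lfloor i/k\rfloor=0$ for $i<k$. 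So, as in Example~\ref{ex0}, $\{U_k\}_{k\in\IN}$ is a neighborhood base at $e$ of a first-countable paratopological group topology $\tau$ on $H$. The same finite-support remark makes each $U_k$ clopen in $\tau$ (open by (P3); closed since $x\notin U_k$ forces $x_{i_0}\notin A_{i_0}^{\lfloor i_0/k\rfloor}$ for some $i_0\le\max\mathrm{supp}(x)$, whence $xU_l\cap U_k=\emptyset$ once $l>\max\mathrm{supp}(x)$), so $\tau$ is zero-dimensional; and $\bigcap_kU_kU_k^{-1}=\{e\}$ (each coordinate of $U_kU_k^{-1}$ is $\{e\}$ once $k>i$), so $\tau$ is Hausdorff. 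For $\flat$-regularity, the topology $\sigma$ on $H$ inherited from $\prod_iG_i$ with discrete factors is a group topology with $\sigma\subset\tau$ (its basic neighborhood $\bigoplus_{i\notin S}G_i$, $S$ finite, contains $U_l$ for $l>\max S$), hence $\sigma\subset\tau_\flat$; since $U_k=\big(\prod_iA_i^{\lfloor i/k\rfloor}\big)\cap H$ is $\sigma$-closed it is $\flat$-closed, so every neighborhood of $e$ contains a $\flat$-closed neighborhood of $e$. Finally, take $U=U_1$, whose $i$-th coordinate is $A_i^i$: for each $k$ choose $i\ge k$; the element $\widetilde g_i$ equal to $g_i$ in coordinate $i$ and to $e$ elsewhere lies in $(\mp U_k)^n$, as its $i$-th coordinate is $g_i\in(\mp A_i)^n\subset(\mp A_i^{\lfloor i/k\rfloor})^n$, but $\widetilde g_i\notin(\pm U_1)^n$ because the $i$-th coordinate of $(\pm U_1)^n$ is $(\pm A_i^i)^n$, which omits $g_i$. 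Thus $(\mp U_k)^n\not\subset(\pm U_1)^n$ for every $k$, so (as $\{U_k\}$ is a base at $e$) condition (4) of Theorem~\ref{base} fails for $U_1$; hence $\tau_n\neq\tau_\flat$ and $\osc(H)>n$. So $(H,\tau)$ is a first-countable Hausdorff $\flat$-regular zero-dimensional paratopological group on $G^\omega_0$ which is not $n$-oscillating, and $G^\omega_0$ is not absolutely $n$-oscillating.

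For (3): if $G\cong G^\omega_0$, one implication is item (1), and for the converse, if $G$ is not $n$-reversive then by item (2) the group $G^\omega_0\cong G$ carries a paratopological group topology with oscillation $>n$, so $G$ is not absolutely $n$-oscillating.

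The crux is the construction in (2). The sets $U_k$ have to be loose enough in high coordinates that $(\mp U_k)^n$ still captures the ``non-reversive'' witnesses $g_i$ --- which is exactly what forces $\osc(H)>n$ --- yet tight enough to decrease to $\{e\}$, be clopen, and lie above the coarse group topology $\sigma$ (needed for Hausdorffness and $\flat$-regularity). Using the full subsemigroup generated by $A_i$ in coordinate $i$ would enlarge $(\pm U_1)^n$ and could destroy $g_i\notin(\pm A_i^i)^n$; the linear rescaling $A_i^{\lfloor i/k\rfloor}$ is what keeps every exponent bounded while still giving a genuine decreasing base, and it is also what makes $U_{2k}U_{2k}\subset U_k$ work.
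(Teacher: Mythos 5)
Your proof is correct and follows essentially the paper's own route: part (1) is the paper's argument verbatim, and in part (2) you build the same kind of box neighborhoods in $G^\omega_0$ from the non-reversivity witnesses $A_i$ with coordinate-dependent exponents, verify the Pontryagin conditions as in Example~\ref{ex0}, obtain Hausdorffness and $\flat$-regularity from the weaker product-of-discrete-groups topology, and refute $n$-oscillation via condition (4) of Theorem~\ref{base}. The only deviation is cosmetic: the paper scales exponents by a countable family of sublinear functions ($f(m)/m\to0$) and derives a contradiction at a coordinate with $0<g(m)\le f(m)<m$, whereas you use the linear scalings $\lfloor i/k\rfloor$ and test directly against $U_1$, whose $i$-th exponent is exactly $i$, which works equally well.
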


\begin{proof} The last statement follows directly from the previous
two statements.

Assume that a group $G$ is $n$-reversive and find $m\in\IN$ such
that $(\mp A)^n\subset(\pm A^m)^n$ for each subset $A\subset G$
containing the unit $e$ of $G$. To show that $G$ is absolutely
$n$-oscillating, suppose that $\tau$ is a topology on $G$ making
the group operation of $G$ continuous. Given any neighborhood $U$
of the unit $e$ in $(G,\tau)$, find a neighborhood $W\subset G$ of
$e$ such that $W^m\subset U$. Then $(\mp W)^n\subset (\pm
W^m)^m\subset(\pm U)^n$. By Theorem~\ref{base} the paratopological
group $(G,\tau)$ is $n$-oscillating.

Next, assume that a group $G$ is not $n$-reversive. This means
that for any $m\in\IN$ there is a subset $A_m\subset G$ containing
the unit of $G_n$ such that $(\mp A_m)^n\not\subset(\pm A_m^m)^n$.

It is easy to find a countable family $\mathcal F$ of
non-decreasing maps $f:\omega\to\IZ_+=\{0\}\cup\IN$, satisfying
the following conditions \begin{enumerate}
\item $\lim_{n\to\infty}\frac{f(n)}n=0$ and
$\lim_{n\to\infty} f(n)=\infty$ for any $f\in\mathcal F$;
\item for any $f,g\in\mathcal F$ there is $h\in\mathcal F$ with
$2h\le \min\{f,g\}$.
\end{enumerate}

For any $f\in\mathcal F$ let $U_f=\{(g_m)_{m\in\omega}\in
G^\omega_0: g_m\in A_m^{f(m)}\}$. Repeating the argument from the
proof of Example~\ref{ex0} it can be shown that the family
$\mathcal B=\{U_f:f\in\mathcal F\}$ forms a neighborhood base at
the unit of some zero-dimensional first-countable paratopology
$\tau$ on $G^\omega_0$. Since each set $U_f$, $f\in\mathcal F$, is
closed in the Tychonov product topology on $G^\omega_0$ which is
weaker than $\tau$, we get that $U_f$ is $\flat$-closed. Hence the
paratopological group $(G^\omega_0,\tau)$ is Hausdorff and
$\flat$-regular. We claim that it not $n$-oscillating.

Assuming that $(G,\tau)$ is $n$-oscillating, we can find functions
$f,g\in\mathcal F$ such that $(\mp U_g)^n\subset (\pm U_f)^n$
which means that $(\mp A_m^{g(m)})^n\subset (\pm A_m^{f(m)})^n$
for all $m$. Find $m\in\omega$ such that $0<g(m)\le f(m)<m$. Then
$(\mp A_m)^n\subset (\mp A_m^{g(m)})\subset (\pm
A_m^{f(m)})^n\subset (\pm A_m^m)^n$ which contradicts to the
choice of the set $A_m$.
\end{proof}

A similar statement holds for $(n,\infty)$-reversive groups. We
remind that a paratopological group $G$ is {\em Lawson\/} if it
possesses a neighborhood base at the unit, consisting of
subsemigroups of $G$.

\begin{proposition} \label{oscal2} Let $G$ be a group and
$n$ be a positive integer.
\begin{enumerate}
\item If $G$ is $(n,\infty)$-reversive, then any
Lawson paratopology on $G$ is $n$-oscillating.
\item If $G$ is not $(n,\infty)$-reversive, then
the group $G^\omega_0$ is isomorphic to a Lawson first-countable
Hausdorff\/ $\flat$-regular zero-dimensional paratopological group
$H$ with\newline  $\osc(H)>n$.
\end{enumerate}
\end{proposition}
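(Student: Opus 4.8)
The plan is to follow the proof of Proposition~\ref{oscal}, with finite powers $A^m$ systematically replaced by the generated semigroup $A^\infty$; the word ``Lawson'' is precisely what allows one to use honest subsemigroups as neighbourhoods. The first statement is then immediate. Suppose $G$ is $(n,\infty)$-reversive and $\tau$ is a Lawson paratopology on $G$, so there is a base $\mathcal B$ of neighbourhoods of $e$ consisting of subsemigroups. Given a neighbourhood $U$ of $e$, choose $V\in\mathcal B$ with $V\subset U$. Since $V$ is a subsemigroup containing $e$ we have $V^\infty=V$, so $(n,\infty)$-reversivity applied to $A=V$ gives $(\mp V)^n\subset(\pm V^\infty)^n=(\pm V)^n\subset(\pm U)^n$. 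Thus $\tau_n\subset(\tau^{-1})_n$, and Theorem~\ref{base} tells us that $(G,\tau)$ is $n$-oscillating.

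For the second statement I would first extract a convenient witness. Since $G$ is not $(n,\infty)$-reversive, pick $A\subset G$ with $e\in A$ and $(\mp A)^n\not\subset(\pm A^\infty)^n$, and set $S=A^\infty$, a subsemigroup of $G$ with $e\in S$. An easy induction from $A\subset S$ gives $(\mp A)^n\subset(\mp S)^n$, while $(\pm S)^n=(\pm A^\infty)^n$; hence $(\mp S)^n\not\subset(\pm S)^n$, and I fix once and for all an element $a\in(\mp S)^n\setminus(\pm S)^n$. On $\bigoplus_{m}G$ I define, for each $k\in\IN$, $$U_k=\{(g_m)\in G^\omega_0:\ g_m=e\ \text{for}\ m<k\ \text{and}\ g_m\in S\ \text{for}\ m\ge k\}.$$ Each $U_k$ is a subsemigroup containing $e$; the sequence $(U_k)_k$ is decreasing with $\bigcap_kU_k=\{e\}$ and $U_k\cap U_l=U_{\max(k,l)}$. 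I would then verify the Pontriagin conditions (P1)--(P4) exactly as in Example~\ref{ex0}: (P1) is the intersection identity just noted, (P2) and (P3) hold because the $U_k$ are semigroups, and for (P4) one notes that $w\in G^\omega_0$ supported on $\{1,\dots,M\}$ commutes with every element of $U_{\max(M,k)+1}\subset U_k$, the supports being disjoint. By \cite{Ra1} the family $\{U_k\}$ is then a neighbourhood base at $e$ of a paratopology $\tau$ on $H:=G^\omega_0$, which is first-countable and Lawson by construction.

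The separation and regularity properties of $H$ I would obtain by comparing $\tau$ with the Tychonov product topology $\pi$ that $G^\omega_0$ inherits from $G^\omega$ with each factor discrete. Each coordinate projection $G^\omega_0\to G$ is $\tau$-continuous (its kernel contains some $U_k$), so $\pi$ is weaker than $\tau$ and hence weaker than $\tau_\flat$; since $\pi$ is Hausdorff, so are $\tau_\flat$ and $\tau$. Each $U_k$ is an intersection of $\pi$-clopen coordinate-dependent sets, hence $\pi$-closed, hence $\flat$-closed; being also $\tau$-open (a semigroup neighbourhood of each of its points), $U_k$ is $\tau$-clopen, which gives zero-dimensionality and $\flat$-regularity. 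Finally, because multiplication in $G^\omega_0$ is coordinatewise, $(\mp U_k)^n$ is precisely the set of sequences with coordinates $e$ in positions $<k$ and coordinates in $(\mp S)^n$ in positions $\ge k$, while $(\pm U_1)^n$ is the set of sequences all of whose coordinates lie in $(\pm S)^n$. The element of $(\mp U_k)^n$ equal to $a$ in position $k$ and to $e$ elsewhere therefore fails to lie in $(\pm U_1)^n$; since every neighbourhood of $e$ contains some $U_k$, no neighbourhood $V$ of $e$ satisfies $(\mp V)^n\subset(\pm U_1)^n$, i.e.\ $\tau_n\not\subset(\tau^{-1})_n$, so $H$ is not $n$-oscillating and $\osc(H)>n$ by Theorem~\ref{base}.

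The bulk of the labour is the bundle of topological checks for $H$ — the Pontriagin conditions, the $\pi$-closedness of the $U_k$, the passage to $\tau_\flat$ — but each of these runs in complete parallel with Example~\ref{ex0} and Proposition~\ref{oscal}, so I do not expect a genuine obstacle there. The one step that is really the crux is the opening move: recognising that the negation of $(n,\infty)$-reversivity yields a \emph{semigroup} $S\ni e$ with $(\mp S)^n\not\subset(\pm S)^n$. Once that is in place, the Lawson property of $H$ comes for free, and a single rogue element $a$ sitting in one coordinate is enough to break the inclusion $\tau_n\subset(\tau^{-1})_n$.
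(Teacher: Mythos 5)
Your proof is correct and follows essentially the same route the paper intends: part (1) is the analogue of Proposition~\ref{oscal}(1) using a subsemigroup neighbourhood $V=V^\infty$, and part (2) amounts to applying the construction of Proposition~\ref{constr} to the semigroup $S=A^\infty$, with the verification carried out exactly as in Example~\ref{ex0} and Proposition~\ref{oscal} (Pontriagin conditions, comparison with the Tychonov product of discrete copies of $G$ for Hausdorffness, $\flat$-regularity and zero-dimensionality, and the coordinatewise computation of $(\mp U_k)^n$ to defeat condition (4) of Theorem~\ref{base}). The observation that non-$(n,\infty)$-reversivity yields a subsemigroup $S\ni e$ with $(\mp S)^n\not\subset(\pm S)^n$ is precisely the reduction the paper has in mind.
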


The first statement of this Proposition can be proven by analogy
with the proof of the first statement of Proposition~\ref{oscal}
while the second one follows from the next theorem whose proof
repeats the argument of Example~\ref{ex0} and
Proposition~\ref{oscal}.

\begin{proposition}\label{constr}
Suppose $G$ is a group and $S$ is a subsemigroup of $G$ containing
the unit $e$ of $G$. Then the sets $U_n=\{(g_n)_{n\in\omega}\in
G^\omega_0: g_i=e$ if $i\le n$ and $g_i\in S$ if $i>n\}$,
$n\in\IN$, form a neighborhood base of some Lawson paratopology
$\tau$ on $G$ which has the following properties:
\begin{enumerate}
\item the paratopological group $(G^\omega_0,\tau)$ is $\flat$-regular first-countable
and zero-dimensional;
\item the paratopological group $(G^\omega_0,\tau)$ is $n$-oscillating for
some $n\in\IN$ if and only if $(\mp S)^n\subset(\pm S)^n$.
\end{enumerate}
\end{proposition}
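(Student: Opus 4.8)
The plan is to mimic the construction already carried out in Example~\ref{ex0} and in the proof of Proposition~\ref{oscal}, simplifying it because now a single fixed subsemigroup $S\ni e$ replaces the varying sets $A_m^{f(m)}$. First I would verify that $\{U_n:n\in\IN\}$ is a neighborhood base at $e$ for a group topology-candidate by checking the Pontriagin-type conditions (P1)--(P4) from the proof of Theorem~\ref{base}: (P1) holds since $U_n\cap U_m\supset U_{\max(m,n)}$; (P2) and (P3) hold because each $U_n$ is a \emph{subsemigroup} of $G^\omega_0$ (here we use $e\in S$, so $U_nU_n\subset U_n\subset U_{n-1}$), which simultaneously shows the resulting paratopology is Lawson; and (P4) holds because any $w=(w_i)\in G^\omega_0$ has finite support, say contained in $\{0,\dots,m\}$, so $w^{-1}U_{\max(m,n)+1}w=U_{\max(m,n)+1}\subset U_n$ as $U_k$ for $k>m$ consists of tuples supported above $m$ and is coordinatewise $S$-valued, hence conjugation by $w$ acts coordinatewise only in coordinates $>m$, where $w$ is trivial. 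By the result of \cite{Ra1} invoked in Example~\ref{ex0}, $\{U_n\}$ is then a neighborhood base at $e$ of a (not necessarily Hausdorff) paratopology $\tau$ on $G^\omega_0$, and by construction it is first-countable.

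Next I would establish property (1). The key point is that each $U_n$ is closed in the Tychonov product topology on $G^\omega_0\subset G^\omega$ (with $G$ discrete): $U_n$ is the set of tuples whose first $n$ coordinates equal $e$ and whose remaining coordinates lie in $S$, a product of closed subsets of the discrete spaces $G$, intersected with $G^\omega_0$. Since the Tychonov topology makes the group operations of $G^\omega_0$ continuous and is weaker than $\tau$, it is weaker than $\tau_\flat$, so each $U_n$ is $\flat$-closed; this gives $\flat$-regularity, and since $\bigcap_n U_n$ consists of tuples all of whose coordinates beyond every $n$ lie in $S$ and whose initial segments are all $e$ — forcing the tuple to be $e$ — the group reflexion is Hausdorff, so $(G^\omega_0,\tau)$ is Hausdorff. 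Zero-dimensionality follows because each $U_n$ is also \emph{open} in the Tychonov topology (it is determined by finitely... actually by a coordinatewise condition that is open since $G$ is discrete), hence clopen, and shifts of the $U_n$ form a base of clopen sets; alternatively one repeats the clopenness argument from the end of the proof of Example~\ref{ex0}.

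Finally, property (2): the $n$-oscillator topology $\tau_n$ on $(G^\omega_0,\tau)$ has neighborhood base at $e$ consisting of the sets $(\pm U_n)^n$, and by Theorem~\ref{base} the group is $n$-oscillating iff $\tau_n\subset(\tau^{-1})_n$, i.e.\ iff for every $n$ there is $m$ with $(\mp U_m)^n\subset(\pm U_n)^n$. Because the $U_k$ are nested and each is a coordinatewise power of $S$ shifted up, a coordinatewise computation shows $(\mp U_m)^n$ and $(\pm U_n)^n$ are, in every coordinate $i$ large enough, just $(\mp S)^n$ and $(\pm S)^n$ respectively; so the inclusion holds for some $m$ iff $(\mp S)^n\subset(\pm S)^n$ outright (the forward direction: pick a coordinate $i>\max(m,n)$ and a tuple supported there realizing an arbitrary element of $(\mp S)^n$; the reverse direction: if $(\mp S)^n\subset(\pm S)^n$ then already $m=n$ works coordinatewise, with the initial coordinates being trivial on both sides). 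I expect the main obstacle to be the bookkeeping in this last coordinatewise comparison — making precise that the "finite support plus tail condition" structure of $U_m$ is compatible with forming the oscillator words $(\mp U_m)^n$ coordinatewise, so that $(\pm U_m)^n=\{(g_i):g_i=e\text{ for }i\le m,\ g_i\in(\pm S)^n\text{ for }i>m\}$ and similarly for $(\mp U_m)^n$ — but this is exactly the routine verification already implicit in Example~\ref{ex0}, so no genuinely new difficulty arises.
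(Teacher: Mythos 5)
Your proposal follows essentially the route the paper intends: the paper gives no separate proof of Proposition~\ref{constr}, saying only that its proof repeats the arguments of Example~\ref{ex0} (the Pontriagin conditions (P1)--(P4) together with the reference to \cite{Ra1}) and of Proposition~\ref{oscal} ($\flat$-closedness of the basic neighborhoods via the Tychonov product topology). Your verification of (P1)--(P4), the Lawson and first-countability claims, the $\flat$-regularity argument, and the coordinatewise identification $(\pm U_m)^n=\{(g_i)\in G^\omega_0: g_i=e\mbox{ for }i\le m,\ g_i\in(\pm S)^n\mbox{ for }i>m\}$ combined with condition (4) of Theorem~\ref{base} (test an arbitrary element of $(\mp S)^n$ on a single coordinate $i>\max(m,n)$ for necessity; coordinatewise inclusion with $m=n$ for sufficiency) is exactly that argument and is correct in substance.

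Two steps need repair, though neither affects the conclusion. First, your claim that each $U_n$ is open in the Tychonov topology is false unless $S=G$: $U_n$ constrains \emph{all} coordinates $i>n$ to lie in $S$, while Tychonov-basic sets constrain only finitely many coordinates (for $G=\IZ$ and $S=\{0,1,2,\dots\}$ every Tychonov neighborhood of $e$ meets the complement of $U_0$). Zero-dimensionality should instead be argued inside $\tau$ itself: $U_n$ is $\tau$-open (it is a basic neighborhood and a subsemigroup, so $xU_n\subset U_n$ for every $x\in U_n$) and $\tau$-closed (it is closed in the weaker Tychonov topology, hence in $\tau$), so the translates $xU_n$ form a base of $\tau$-clopen sets --- this is the simpler form of the clopenness argument you mention as a fallback from Example~\ref{ex0}. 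Second, $\bigcap_n U_n=\{e\}$ only shows that $\tau$ is $T_1$; it does not by itself make the group reflexion Hausdorff, since $\tau_\flat$ is weaker than $\tau$. Hausdorffness (which the statement does not actually require) follows instead from the observation you already use: the Hausdorff Tychonov group topology is weaker than $\tau$, hence weaker than $\tau_\flat$. With these two corrections your proof is complete and coincides with the paper's.
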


We use the above Proposition to construct an example of a
2-oscillating paratopological group whose mirror paratopological
group is not 2-oscillating. This example relies on a semigroup
with is left reversive but not right reversive. We remind that a
semigroup $S$ is {\em left\/} (resp. {\em right\/}) {\em
reversive\/} if for any elements $a,b\in S$ the intersection
$aS\cap bS$ (resp. $Sa\cap Sb$) is not empty, see
\cite[\S1.10]{CP}. If $S$ is a subsemigroup of a group, this is
equivalent to saying that $S^{-1}S\subset SS^{-1}$ (resp.
$SS^{-1}\subset S^{-1}S$). The simplest example of a semigroup
which is left reversive but not right reversive is the semigroup
$S$ generated by two transformations $y=2x$ and $y=x+1$ in the
group $\Aff(\IQ)$ of affine transformations of the field $\IQ$ of
rational numbers. This semigroup can be also defined in an
abstract way as a semigroup generated by two elements $a,b$ with
the relation $ab=b^2a$, see Example 1 to \cite[\S 1.10]{CP}. The
left and non-right reversivity of $S$ implies that $S^{-1}S\subset
SS^{-1}$ but $SS^{-1}\not\subset S^{-1}S$. Observe that the group
$\Aff(\IQ)$ is {\em metabelian\/} in the sense that it contains a
normal abelian subgroup with abelian quotient. Applying
Proposition~\ref{constr} to the semigroup
$S\cup\{e\}\subset\Aff(\IQ)$, we get the following unexpected
example showing that the oscillation number is powerful enough to
distinguish between a paratopological group and its mirror group.

\begin{example}\label{ex11} There is a Lawson Hausdorff countable first-countable metabelian
$\flat$-regular group $G$ with $\osc(G)=2$ and $\osc(G^-)=3$.
\end{example}

In spite of the fact that the oscillation numbers $\osc(G)$ and
$\osc^-(G)$ of a paratopological group $G$ and its mirror
paratopological group $G^-$ need not be equal, they cannot differ
very much. The following proposition can be easily derived from
the definitions and the equality $(\tau_n)^{-1}=(\tau^{-1})_n$
holding for each odd $n$.

\begin{proposition} Suppose $G$ is a topological group with
finite oscillation and $G^-$ is its mirror paratopological group.
Then \begin{enumerate}
\item $\osc(G)-1\le\osc(G^-)\le \osc(G)$ if the number $\osc(G)$ is odd;
\item $\osc(G)\le \osc(G^-)\le\osc(G)+1$ if $\osc(G)$ is even;
\end{enumerate}
\end{proposition}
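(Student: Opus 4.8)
The plan is to reduce the statement to the single fact that $\osc(G)$ and $\osc(G^-)$ are $\le$ the same odd integers, and then to finish by an elementary arithmetic argument. (Here I read the hypothesis as ``$G$ is a paratopological group with finite oscillation'', which is the only case of interest and subsumes topological groups.)

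First I would record a reformulation of the oscillation number. For a paratopological group $H$ and $n\in\IN$, say that $H$ \emph{has property $P(n)$} if $(\pm U)^n$ is a neighbourhood of the unit in $H^\flat$ for every neighbourhood $U$ of the unit in $H$. Since $(\pm U)^{n+1}\supset(\pm U)^n$ whenever $e\in U$, property $P(n)$ is inherited upward in $n$: if $(\pm U)^n$ is a $\flat$-neighbourhood of $e$, then so is the larger set $(\pm U)^{n+1}$. Hence, if $H$ has finite oscillation, then $\osc(H)$ is the least $n$ with $P(n)$, and $\osc(H)\le n$ is equivalent to $P(n)$. (Alternatively, by Theorem~\ref{base}, the fact that $\{(\pm U)^n:U\in\mathcal N(e)\}$ is a neighbourhood base at $e$ for $\tau_n$, and the inclusion $\tau_\flat\subset\tau_n$, one has $\osc(H)\le n\Leftrightarrow\tau_n=\tau_\flat$.)

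Next I would exploit the symmetry of odd oscillators. Fix an odd $n$. By the identity $((\pm U)^n)^{-1}=(\mp U)^n$ recorded in the text, and since the mirror $n$-oscillators of $G$ are precisely the $n$-oscillators of $G^-$, the $n$-oscillators of $G$ are exactly the inverses of the $n$-oscillators of $G^-$. Moreover $\tau^{-1}_\flat=\tau_\flat$, so $G$ and $G^-$ have one and the same group reflexion $G^\flat$; in the topological group $G^\flat$ inversion is a homeomorphism fixing $e$, so a set $X\ni e$ is a $\flat$-neighbourhood of $e$ iff $X^{-1}$ is. Combining this with the previous paragraph, $G$ has $P(n)$ iff $G^-$ has $P(n)$, for every odd $n$. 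In particular $G^-$ has finite oscillation (take the least odd $n\ge\osc(G)$), and
$$\osc(G)\le n\iff\osc(G^-)\le n\qquad\text{for every odd }n.$$

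Finally, write $a=\osc(G)$ and $b=\osc(G^-)$, both finite. If $a$ is odd, then $a$ is itself an odd integer $\ge a$, so $b\le a$; and if $a>1$ then $a-2$ is an odd integer with $a\not\le a-2$, hence $b\not\le a-2$, i.e.\ $b\ge a-1$; this gives (1), the case $a=1$ being trivial. If $a$ is even, then $a+1$ is an odd integer $\ge a$, so $b\le a+1$; and $a-1$ is an odd integer with $a\not\le a-1$, so $b\ge a$; this gives (2). I do not expect any real obstacle; the only point requiring a moment's care is that $G$ and $G^-$ have the same group reflexion, which holds because a group topology weaker than $\tau$, being a group topology, equals its own mirror and is therefore also weaker than $\tau^{-1}$ — and symmetrically, so $\tau_\flat=\tau^{-1}_\flat$.
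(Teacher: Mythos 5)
Your argument is correct, and it is essentially the paper's intended proof: the paper derives the proposition from the odd-$n$ symmetry $(\tau_n)^{-1}=(\tau^{-1})_n$ (equivalently, your pointwise identity $((\pm U)^n)^{-1}=(\mp U)^n$ together with $\tau_\flat=(\tau^{-1})_\flat$), followed by the same parity bookkeeping you carry out.
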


It is clear that each 2-oscillating paratopological group is
3-oscillating. We shall show that (regular) 3-oscillating
paratopological groups are $\flat$-separated (and
$\flat$-regular).

\begin{theorem}\label{three} Any (regular) Hausdorff
3-oscillating paratopological group $G$
is $\flat$-separated (and $\flat$-regular).
\end{theorem}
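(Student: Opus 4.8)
Here is how I would approach it.

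\medskip
First I would turn $3$-oscillation into a single usable fact. Write $\mathcal N(e)$ for a base of open neighborhoods of the unit $e$ of $G$. Since $G$ is $3$-oscillating, Theorem~\ref{base} (with $n=3$) together with the definition of $\mathrm{osc}$ gives $\tau_3=\tau_\flat$; thus the group reflexion is literally $G^\flat=(G,\tau_3)$, and by that theorem it is a topological group. Consequently $\{(\pm U)^3=UU^{-1}U:U\in\mathcal N(e)\}$ is a base of neighborhoods of $e$ in $G^\flat$, and in particular every set $(\pm U)^3$, as well as its inverse $(\mp U)^3$, is a $\tau_\flat$-neighborhood of $e$. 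This is the only consequence of $3$-oscillation I will use.

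\medskip
The $\flat$-separatedness is then immediate. Since $G$ is Hausdorff, Corollary~\ref{relation} gives $T_1(G)\ge 3$, so $\tau_1,\tau_2,\tau_3$ are all $T_1$ (equivalently, apply Theorem~\ref{separ} to go from "$\tau_1$ is $T_2$" to "$\tau_3$ is $T_1$"). As $\tau_3=\tau_\flat$, the topological group $G^\flat$ is $T_1$, hence Hausdorff; that is, $G$ is $\flat$-separated.

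\medskip
For $\flat$-regularity, assume in addition that $G$ is regular and fix $U\in\mathcal N(e)$. A $\flat$-closed set is $\tau_\flat$-closed, hence $\tau$-closed since $\tau_\flat\subset\tau$; and for any $\tau$-open $V\ni e$ the $\flat$-closure $\overline V^{\,\tau_\flat}$ is $\flat$-closed and, containing $V$, is a neighborhood of $e$. So it suffices to produce a $\tau$-open $V\ni e$ with $\overline V^{\,\tau_\flat}\subset U$; then $\overline V^{\,\tau_\flat}$ is the required $\flat$-closed neighborhood inside $U$. Since the $(\pm N)^3$ form a base at $e$ in the topological group $G^\flat$, one has $\overline V^{\,\tau_\flat}=\bigcap_{N\in\mathcal N(e)}V(\pm N)^3$. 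The first thing I would prove is the estimate of the $\flat$-closure by an honest $\tau$-closure,
\[
\overline V^{\,\tau_\flat}\ \subset\ \overline{\,VV^{-1}V\,}^{\,\tau}\qquad(V\text{ a }\tau\text{-open neighborhood of }e):
\]
given $x$ in the left-hand side and a $\tau$-open $O\ni x$, choose $N\in\mathcal N(e)$ with $N\subset V$ and $xN\subset O$; since $x(\pm N)^3$ is a $\tau_\flat$-neighborhood of $x$ it meets $V$, say $xn_1n_2^{-1}n_3=v\in V$ with $n_1,n_2,n_3\in N$, whence $xn_1=vn_3^{-1}n_2\in VV^{-1}V$ while $xn_1\in xN\subset O$, so $O$ meets $VV^{-1}V$ and $x\in\overline{VV^{-1}V}^{\,\tau}$.

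\medskip
The hard part — the only real obstacle — is then to choose $V$ so that $\overline V^{\,\tau_\flat}\subset U$. One cannot simply take $V$ so small that $VV^{-1}V\subset U$: in a paratopological group $V^{-1}$, and hence $VV^{-1}V$, need not be $\tau$-small however small $V$ is (the Sorgenfrey line already exhibits this, and it shows $\overline{VV^{-1}V}^{\,\tau}$ itself is usually too large to lie in $U$). The plan is to work with the $\flat$-closure directly: use regularity of $G$ repeatedly to nest $e\in V\subset\overline V^{\,\tau}\subset V_1\subset\overline{V_1}^{\,\tau}\subset\cdots\subset U$, use continuity of multiplication to keep products small, and exploit that $\tau_\flat$ is now known to be Hausdorff, so $\bigcap_{N\in\mathcal N(e)}(\pm N)^3=\{e\}$ and the $3$-oscillators $(\pm N)^3$ shrink to $e$ in $\tau_\flat$. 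This forces a witness $v\in V$ for a point $x\in\overline V^{\,\tau_\flat}$ to be $\tau_\flat$-arbitrarily close to $x$; feeding this into the safe-point computation above, but through a nested neighborhood rather than $V$ itself, should trap $\bigcap_N V(\pm N)^3$ inside $U$ and finish the proof. I expect everything except this closure-control step (the $\flat$-separatedness, the reduction, and the inclusion $\overline V^{\,\tau_\flat}\subset\overline{VV^{-1}V}^{\,\tau}$) to be entirely routine.
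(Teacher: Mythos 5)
Your first half (the $\flat$-separatedness) is correct and is exactly the paper's argument: Hausdorffness of $\tau$ gives, via Theorem~\ref{separ}, that $\tau_3$ is $T_1$, and $3$-oscillation identifies $(G,\tau_3)$ with $G^\flat$, which is therefore a Hausdorff topological group. Your reduction of $\flat$-regularity to finding $V$ with $\overline{V}^{\,\tau_\flat}\subset U$, and the inclusion $\overline{V}^{\,\tau_\flat}\subset\overline{VV^{-1}V}^{\,\tau}$, are also correct, but they are not where the theorem lives: at the decisive point your text switches from proof to intention (``should trap $\dots$ and finish the proof''). That is a genuine gap. Nothing in your sketch actually produces a $V$ with $\overline{V}^{\,\tau_\flat}\subset U$, and the ingredient you propose to lean on --- that $\bigcap_{N}(\pm N)^3=\{e\}$, i.e.\ the $T_1$-ness of $\tau_\flat$ --- is too weak: knowing that $3$-oscillators shrink to $e$ gives no uniform control of $V(\pm N)^3$, because the oscillator excluding a given point $x\notin U$ must be chosen after $x$, using data attached to that particular point.

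For comparison, the paper's proof of the regularity half is a short pointwise argument that uses $3$-oscillation in its ``mirror'' form, i.e.\ condition (4) of Theorem~\ref{base} with $n=3$. By regularity take $U$ closed in $\tau$ and choose $V\in\mathcal N(e)$ with $V^3\subset U$; one shows $\overline{V}^{\,\tau_\flat}\subset U$ point by point. Given $x\notin U$, closedness of $U$ yields $W\in\mathcal N(e)$ with $xW\cap U=\emptyset$, and continuity of multiplication lets one also demand $Wx^{-1}\subset x^{-1}V$ (a conjugation condition adapted to $x$) and $W\subset V$. Then $3$-oscillation gives $O\in\mathcal N(e)$ with $O^{-1}OO^{-1}\subset WW^{-1}W$, and a direct computation shows $xOO^{-1}O\cap V=\emptyset$: otherwise $x\in VO^{-1}OO^{-1}\subset VWW^{-1}W$, hence $xW^{-1}$ meets $VWW^{-1}$; using $Wx^{-1}\subset x^{-1}V$ and inverting, $x\in VVWW^{-1}\subset V^3W^{-1}$, so $xW$ meets $V^3\subset U$, contradicting $xW\cap U=\emptyset$. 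Since $xOO^{-1}O$ is a $\tau_\flat$-neighborhood of $x$ missing $V$, the point $x$ is not in $\overline{V}^{\,\tau_\flat}$. The two ideas missing from your proposal are precisely these: pick the auxiliary neighborhood $W$ adapted to the individual point $x$ (both through $xW\cap U=\emptyset$ and through $Wx^{-1}\subset x^{-1}V$), and then invoke the quantitative inclusion $O^{-1}OO^{-1}\subset WW^{-1}W$ supplied by $3$-oscillation, rather than only the Hausdorffness of $\tau_\flat$.
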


\begin{proof} Suppose $G$ is a Hausdorff 3-oscillating paratopological group.
This means that the 3-oscillator topology $\tau_3$ coincides with
$\tau_\flat$. By Theorem~\ref{separ}, the topology $\tau_3$ is
$T_1$. Consequently, the topological group $G^\flat=(G,\tau_3)$ is
separated and hence is Hausdorff. This means that the group $G$ is
$\flat$-separated.

Next, let us verify that $G$ is $\flat$-regular provided $G$ is
regular. Fix any neighborhood $U$ of the unit $e$ of $G$. Since
$G$ is regular, we can assume that $U$ is closed in $G$. We have
to find a neighborhood $V\in\mathcal B$ such that the closure
$\overline{V}^\flat$ of $V$ in the topology $\tau_\flat$ lies in
$U$. Let $V$ be a neighborhood of $e$ in $G$ such that $V^3\subset
U$.

To show that $\overline{V}^\flat\subset U$, pick any point
$x\notin U$. We have to find a 3-oscillator $OO^{-1}O$ such that
$xOO^{-1}O\cap V=\emptyset$ (since $G$ is 3-oscillating
3-oscillators form a neighborhood base at the unit of $G^\flat$).
Since $U$ is closed in $G$, there is a neighborhood $W$ of $e$ in
$G$ such that $xW\cap U=\emptyset$. We can assume that $W$ is so
small that $Wx^{-1}\subset x^{-1}V$. The group $G$ is
3-oscillating and thus contains a neighborhood $O\subset G$ such
that $O^{-1}OO^{-1}\subset WW^{-1}W$. We claim that $xOO^{-1}O\cap
V=\emptyset$. Assuming the converse we would get $x\in
VO^{-1}OO^{-1}\subset VWW^{-1}W$ and thus $xW^{-1}\cap
VWW^{-1}\ne\emptyset$. Then $Wx^{-1}\cap
WW^{-1}V^{-1}\ne\emptyset$ and hence $x^{-1}V\cap
WW^{-1}V^{-1}\ne\emptyset$. After inversion we get $V^{-1}x\cap
VWW^{-1}\ne\emptyset$ and $x\in VVWW^{-1}\subset V^3W^{-1}$. Then
$xW\cap U\supset xW\cap V^3\ne\emptyset$ which contradicts to the
choice of the neighborhood $W$.
\end{proof}

Theorem~\ref{three} and Propositions~\ref{sat}, \ref{oscal} imply

\begin{corollary}\label{flat} A Hausdorff
(regular) paratopological group $G$ is $\flat$-separated
(and $\flat$-regular) provided $G$ satisfies one of the following
conditions:
\begin{enumerate}
\item $G$ is a saturated paratopological group;
\item $G$ is a paratopological LSIN-group;
\item $G$ is absolutely 2-oscillating;
\item $G$ is collapsing.
\end{enumerate}
\end{corollary}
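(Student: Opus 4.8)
The plan is to reduce all four cases to Theorem~\ref{three} by showing that under any of the hypotheses the paratopological group $G$ is $2$-oscillating, hence also $3$-oscillating. Once $3$-oscillation is in hand, Theorem~\ref{three} immediately gives that a Hausdorff such $G$ is $\flat$-separated, and $\flat$-regular whenever $G$ is regular, which is exactly the assertion of the corollary. So the whole argument is the observation that each of (1)--(4) forces $\osc(G)\le 2$.

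For cases (1) and (2) there is essentially nothing to prove: Proposition~\ref{sat} states precisely that every saturated paratopological group and every paratopological LSIN-group is $2$-oscillating. For case (3), if $G$ is absolutely $2$-oscillating then, by the very definition of absolute $n$-oscillation, every paratopology making the group operation continuous is $2$-oscillating; in particular the given paratopology on $G$ is, so $\osc(G)\le 2$.

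The only case needing an extra step is (4). Here I would use the result of \cite{Ba} recalled just before Proposition~\ref{oscal}: a group is collapsing if and only if it is $2$-reversive. Hence a collapsing group $G$ is $2$-reversive, and Proposition~\ref{oscal}(1) then yields that $G$ is absolutely $2$-oscillating; as in case (3) this makes the paratopology on $G$ be $2$-oscillating. (One should be a little careful that "absolutely $2$-oscillating" is an algebraic property of the underlying group, whereas we want a statement about the topologized $G$ — but that is precisely what absoluteness delivers.)

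Having established $\osc(G)\le 2$ in all four cases, I then invoke the trivial implication $\osc(G)\le 2\Rightarrow\osc(G)\le 3$ (every $2$-oscillating paratopological group is $3$-oscillating) and apply Theorem~\ref{three}. I expect no real obstacle: the only nontrivial ingredient is the equivalence "collapsing $\Leftrightarrow$ $2$-reversive," which is quoted from \cite{Ba}; the rest is just assembling results already proved in the paper.
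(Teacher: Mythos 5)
Your proposal is correct and follows essentially the same route as the paper, which derives the corollary precisely from Theorem~\ref{three} combined with Proposition~\ref{sat} (cases (1)--(2)), the definition of absolute $2$-oscillation (case (3)), and Proposition~\ref{oscal} together with the equivalence ``collapsing $\Leftrightarrow$ $2$-reversive'' from \cite{Ba} (case (4)). Your explicit handling of the passage from the algebraic property ``absolutely $2$-oscillating'' to the given paratopology, and the step $\osc(G)\le 2\Rightarrow\osc(G)\le 3$, merely spells out what the paper leaves implicit.
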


Recall that a topological space $X$ is {\em \v Cech-complete\/} if
it is a $G_\delta$-set in any its compactification, see
\cite[\S3.9]{En}. It is well known that each complete metric space
is \v Cech-complete.

\begin{theorem} A Hausdorff $\flat$-regular paratopological group $(G,\tau)$
has countable oscillation provided the group reflexion $G^\flat$
is a Lindel\"of \v Cech-complete space.
\end{theorem}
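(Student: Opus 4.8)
The plan is to show that for every neighborhood $U$ of $e$ in $G$ there is an $n\in\IN$ such that $(\pm U)^n$ is a neighborhood of $e$ in $G^\flat$; equivalently, using $\flat$-regularity to first replace $U$ by a $\flat$-closed neighborhood $F\subset U$ of $e$, it suffices to cover $G^\flat$ by countably many translates of the interiors of the sets $(\pm F)^n$ and then invoke \v Cech-completeness to extract a single $n$ that works. First I would fix a $\flat$-closed neighborhood $F\subset U$ of $e$ (possible since $G$ is $\flat$-regular) and choose a decreasing sequence $F=F_0\supset F_1\supset F_2\supset\cdots$ of $\flat$-closed neighborhoods of $e$ with $F_{k+1}F_{k+1}^{-1}F_{k+1}\subset F_k$ for each $k$, which is legitimate because $G$ is a paratopological group and $\flat$-regular. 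Then $K=\bigcap_k F_k$ is a $\flat$-closed subset, and the chain condition forces $K$ to be a subgroup: $K^{-1}K\subset\bigcap_k F_{k}F_{k}^{-1}\subset\bigcap_k F_{k-1}=K$, and similarly $KK\subset K$. Moreover $K$ is the closure of $\{e\}$ in a suitable oscillator topology on $G$, but the key point is just that $K\subset F\subset U$.

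The second step is the covering argument. Since $G^\flat$ is Lindel\"of and the sets $\mathrm{int}_\flat\big((\pm F_k)^{2}\big)$ (or any fixed oscillator of $F_k$ that is a $\tau_\flat$-neighborhood of $e$ — note $(\pm F_k)^{2}=F_kF_k^{-1}$ need not yet be $\flat$-open, so one works with the $G^\flat$-interior of $(\pm F_k)^m$ for $m$ large, or simply with genuine $\tau_\flat$-neighborhoods $W_k$ of $e$ chosen so that $W_k\subset (\pm F_k)^{k}$, which exist because the identity map $G\to G^\flat$ is continuous onto a topological group and one can interpose) form a neighborhood base of $e$ in $G^\flat$ at least down to $K$. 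More precisely, I would argue that $\bigcap_k (\pm F_k)^{k}\supset$ the $\flat$-closure of $\{e\}$ is absorbed, and cover $G^\flat\setminus$ (something) by countably many left translates. The cleanest route: consider the quotient $q:G^\flat\to G^\flat/K$; since $K$ is a $\flat$-closed subgroup this quotient is a Hausdorff topological group, it inherits Lindel\"ofness, and the image $q(U)$ contains a neighborhood of the identity. One then shows $G^\flat/K$ is \v Cech-complete (a closed subspace / quotient of a \v Cech-complete Lindel\"of space by a closed subgroup stays \v Cech-complete by Frol\'ik-type results, or one uses that a Lindel\"of \v Cech-complete group is a $G_\delta$ in its compactification and pushes this through the quotient), hence by the classical Baire-category / \v Cech-completeness argument for topological groups there is a single $k$ with $q\big((\pm F_k)^{k}\big)$ — a countable cover of a \v Cech-complete Lindel\"of space by the translates of this set, one of whose closures has nonempty interior — forcing $(\pm F_k)^{2k}$ or so to be a neighborhood of $e$ in $G^\flat/K$, and pulling back, $(\pm F)^{N}$ is a $\tau_\flat$-neighborhood of $e$ for some $N$ depending on $U$. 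That gives $\osc(G)\le\infty$ in the countable sense, i.e. countable oscillation.

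The main obstacle, and the step I would spend the most care on, is the passage from "$\tau_\flat$-interiors of large oscillators of a fixed neighborhood cover $G^\flat$" to "one oscillator $(\pm U)^N$ is itself a $\tau_\flat$-neighborhood of $e$" — this is exactly where \v Cech-completeness and the Lindel\"of property must be combined, via a Baire-category argument inside the topological group $G^\flat$ (or $G^\flat/K$): a \v Cech-complete space is Baire, so a countable cover by closed sets (the $\flat$-closures of translates of $(\pm F_k)^{k}$, which are $\flat$-closed by our choice) has a member with nonempty $\flat$-interior, and then the standard group-theoretic trick $\mathrm{int}(A)\cdot A^{-1}\ni e$ upgrades this to a neighborhood of the identity lying inside a slightly larger oscillator $(\pm F)^{N}$. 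The delicate bookkeeping is (i) ensuring the sets one covers $G^\flat$ with really are $\flat$-closed with the oscillator chain nesting correctly so that the product of two of them lands in $(\pm U)^N$, and (ii) justifying that the relevant quotient or the space $G^\flat$ itself is still \v Cech-complete and Lindel\"of after the reductions — both are standard but must be stated. Once that is in place, "some $N=N(U)$ works for each $U$" is precisely the definition of countable oscillation, completing the proof.
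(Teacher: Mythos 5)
Your overall strategy (reduce to a $\flat$-closed neighborhood, then run a Baire-category argument in the Lindel\"of \v Cech-complete group $G^\flat$) is in the right spirit, but two of your steps fail. The first is the very first construction: in a paratopological group you cannot, given a neighborhood $F_k$ of $e$, find a neighborhood $F_{k+1}$ with $F_{k+1}F_{k+1}^{-1}F_{k+1}\subset F_k$. Since $e\in F_{k+1}$, that inclusion already yields $F_{k+1}^{-1}\subset F_k$; if such a choice were possible for every $F_k$, inversion would be continuous at $e$ and $G$ would be a topological group, making the theorem vacuous. Continuity of multiplication only gives $F_{k+1}F_{k+1}F_{k+1}\subset F_k$, with no control on inverses --- this asymmetry is the whole point of the problem. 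Consequently the subgroup $K=\bigcap_k F_k$ and the quotient $G^\flat/K$ on which you base the second half of your argument are simply not available.

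The second, more central, gap is in the Baire-category step itself. The oscillators $(\pm F)^n$ are not $\flat$-closed even when $F$ is (a product of closed sets need not be closed), and once you replace them by their $\tau_\flat$-closures you lose containment in any finite oscillator: in $G^\flat$ one only has $\overline{A}\subset AV$ for $\tau_\flat$-neighborhoods $V$ of $e$, and such $V$ are exactly what you do not yet control, so there is no way to squeeze $\overline{(\pm F_k)^{k}}\cdot\bigl(\overline{(\pm F_k)^{k}}\bigr)^{-1}$ into some $(\pm F)^N$. What the argument really needs is the Baire Property of the oscillators themselves, and this is the ingredient your outline is missing. The paper's proof supplies it as follows: the set $H=\bigcup_{n\in\IN}(\pm U)^n$ (with $U$ $\flat$-closed) is an open, hence closed, subgroup of $G^\flat$, so it is Lindel\"of, \v Cech-complete and Baire; the sets $(\pm U)^n$ cover $H$ (no translates or closures needed), so some $A=(\pm U)^n$ is non-meager in $H$; since $U$ is closed in the Lindel\"of \v Cech-complete space $H$ it is K-analytic, hence $A$, a continuous image of $U^n$, is K-analytic and therefore has the Baire Property in $H$; the Banach--Kuratowski--Pettis theorem then shows $AA^{-1}\subset(\pm U)^{2n+2}$ is a neighborhood of $e$ in $H$, hence in $G^\flat$. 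Without this K-analyticity/Baire-Property step (or some substitute for it), your passage from ``some oscillator is non-meager (or its closure has interior)'' to ``some $(\pm U)^N$ is a $\tau_\flat$-neighborhood of $e$'' is unjustified.
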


\begin{proof} Let $\mathcal N(e)$ be a neighborhood base at the unit $e$
of the group $G$, consisting of $\flat$-closed sets. Then
$\mathcal B=\{\bigcup_{n\in\mathbb N}(\pm U)^n:U\in\mathcal
N(e)\}$ is a base at the unit of some (not necessary Hausdorff)
group topology on $G$ weaker than $\tau$. It follows that for any
neighborhood $U\in\mathcal N(e)$ the set $\bigcup_{n\in\mathbb
N}(\pm U)^n$, being an open subgroup of $G^\flat$, is closed in
$G^\flat$ and thus is Lindel\"of and \v Cech-complete.

Fix any neighborhood $U\in\mathcal N(e)$. We have to find $m$ such
that $(\pm U)^m$ is a neighborhood of $e$ in $G^\flat$. The group
$H=\bigcup_{n\in\IN}(\pm U)^n$, being \v Cech-complete, is Baire.
Consequently, there is $n\in\mathbb N$ such that the set $A=(\pm
U)^n$ is not meager in $H$. We claim that $A\cdot A^{-1}$ is a
neighborhood of the unit in $H$.

We shall use Banach-Kuratowski-Pettis Theorem (see
\cite[p.279]{Kel} or \cite[9.9]{Ke}) asserting that for any subset
$B$ of a topological group the set $BB^{-1}$ is a neighborhood of
the unit provided $B$ is non-meager and has the Baire Property in
the group. We remind that a subset $B$ of a topological space $X$
has {\em the Baire Property\/} in $X$ if $B$ contains a
$G_\delta$-subset $C$ of $X$ such that $B\setminus C$ is meager in
$X$. Thus to show that $AA^{-1}$ is a neighborhood of the unit in
$H$ it suffices to verify that the set $A=(\pm U)^n$ has the Baire
Property in $H$.

For this we shall use the well-known fact (see \cite{RJ} or
\cite[3.1]{Ha}) asserting that each $K$-analytic subspace $X$ of a
Tychonoff topological space $Y$ has the Baire Property in $Y$. We
remind that a topological space $X$ is {\em K-analytic\/} if $X$
is a continuous image of a Lindel\"of \v Cech-complete space. It
is known that the product of two K-analytic spaces is K-analytic
and the continuous image of a K-analytic space is K-analytic, see
\cite{RJ}. Observe that the subspace $U\subset H$, being a closed
subspace of the Lindel\"of \v Cech-complete space $H$, is
K-analytic. Then the space $A=(\pm U)^n\subset H$, being a
continuous image of the product $U^n$, is K-analytic too. Hence
$A$ has the Baire property in $H$ and by the
Banach-Kuratowski-Pettis Theorem, $AA^{-1}$ is a neighborhood of
the unit in $H$. Observing that $AA^{-1}\subset(\pm U)^{2n+2}$ we
see that the set $(\pm U)^m$ is a neighborhood of the unit of $H$
for $m=2n+2$. Since the group $H$ is open in $G^\flat$, we get
that $(\pm U)^m$ is a neighborhood of the unit in $G^\flat$.
\end{proof}

Next, we give a $\pi$-base characterization of saturated
paratopological groups. A collection $\mathcal W$ of non-empty
open subsets of a topological space $X$ is a {\em $\pi$-base\/} if
any non-empty open set $U\subset X$ contains an element $W$ of
$\mathcal W$.

\begin{theorem}\label{pibase} A paratopological group $G$ is
saturated if and only if the collection of nonempty $\flat$-open
subsets forms a $\pi$-base in $G$.
\end{theorem}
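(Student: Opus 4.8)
The statement asserts an equivalence, so I would prove the two implications separately. For the forward direction, assume $G$ is saturated. Given any non-empty open set $W\subset G$, I want to find inside it a non-empty $\flat$-open set. The natural candidate to work with is a set of the form $V(\pm V)^{?}$ or, more precisely, a shifted oscillator; but since the $\flat$-topology has $\{\bigcup_n(\pm U)^n: U\in\mathcal N(e)\}$ as a neighbourhood base at the unit, the cleanest route is: pick $x\in W$ and an open neighbourhood $V$ of $e$ with $xV\subset W$. By saturatedness, $V^{-1}$ has non-empty interior; replacing $V$ by a smaller neighbourhood I may arrange that $V^{-1}$ has non-empty interior and then use the point $x$ together with a further shrinking to produce inside $xV$ a translate of a ``two-sided'' set that is genuinely $\flat$-open. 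The idea mirrors the proof of Proposition~\ref{sat}: saturatedness lets one convert the one-sided neighbourhood $V$ into a set of the form $Vy^{-1}$ or $y^{-1}V$ containing an open piece, and iterating this (as in the estimate $W^{-1}W\subset VVV^{-1}$ there) one manufactures a set that contains $z\cdot UU^{-1}$ for some $z$ and some neighbourhood $U$ of $e$, hence contains a non-empty $\flat$-open set, and lies in $W$.

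For the reverse direction, assume the non-empty $\flat$-open sets form a $\pi$-base in $G$, and let $U$ be an arbitrary neighbourhood of $e$; I must show $U^{-1}$ has non-empty interior. Since $\tau_\flat\subset\tau$, every non-empty $\flat$-open set is $\tau$-open, so the hypothesis says that every non-empty $\tau$-open set contains a non-empty set which is symmetric-ish in the $\flat$-sense. Concretely, I would take a non-empty $\flat$-open set $W\subset U$; being $\flat$-open and non-empty, after a left translation $W$ contains a basic $\flat$-neighbourhood of $e$, i.e.\ contains $z^{-1}\bigl(\bigcup_n(\pm O)^n\bigr)$ for some $z\in W$ and some $O\in\mathcal N(e)$, and in particular contains $z^{-1}OO^{-1}$. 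Then $W^{-1}\supset O O^{-1} z$, which contains the non-empty open set $O^{\circ}O^{-1}z$ (where $O^\circ$ is the interior of $O$); since $W\subset U$ gives $U^{-1}\subset W^{-1}$ is the wrong direction, I instead use $W\subset U \Rightarrow U^{-1}\supset W^{-1}\supset OO^{-1}z$, so $U^{-1}$ has non-empty interior. Thus $G$ is saturated.

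The main obstacle is the forward direction: turning the single ``one coordinate'' saturatedness condition (``$U^{-1}$ has interior'') into the genuinely two-sided $\flat$-open set sitting inside a prescribed $W$. The delicate point is bookkeeping of left versus right translations — saturatedness produces an open set inside $Vx^{-1}$ or $x^{-1}V$, and one must carefully compose two such steps (as in Proposition~\ref{sat}'s chain $W^{-1}W\subset Vx^{-1}W\subset VVx^{-1}\subset VVV^{-1}\subset UU^{-1}$) so that the resulting set is not merely open but contains a shift of $OO^{-1}$, hence a shift of a basic $\flat$-neighbourhood, and simultaneously stays inside $W$. Once the right chain of inclusions is set up this is routine, but getting the translations to line up is where the care is needed; the reverse direction is essentially immediate from the description of the base of $\tau_\flat$ at $e$.
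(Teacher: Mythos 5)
Your two-implication plan does follow the paper's route, but it rests on a false auxiliary claim. You assert that $\{\bigcup_{n\in\mathbb N}(\pm U)^n: U\in\mathcal N(e)\}$ is a neighbourhood \emph{base} of $\tau_\flat$ at the unit. It is not: since $e\in U$, the set $\bigcup_{n\in\mathbb N}(\pm U)^n$ is precisely the subgroup of $G$ generated by $U$. Already for the Sorgenfrey line (the paper's standard saturated example) every such set is the whole group $\IR$, while $\tau_\flat$ is the Euclidean topology; the same failure occurs for any connected topological group. What is true is only that these sets are $\tau_\flat$-neighbourhoods of $e$ (they generate a group topology weaker than $\tau$, hence contained in $\tau_\flat$), which is useless for your purpose because the containment goes the wrong way. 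In your reverse direction this base claim is the sole justification of the key step. Fortunately the step you actually need is true for a simpler reason available in the paper: since $\tau_\flat\subset\tau_2$, a nonempty $\flat$-open $W\subset U$ is $\tau_2$-open, so for $z\in W$ there is an open $O\in\mathcal N(e)$ with $zOO^{-1}\subset W$, whence $U^{-1}\supset W^{-1}\supset OO^{-1}z^{-1}$, a nonempty open set. (The paper is even more direct: $\tau_\flat$ being a group topology, a nonempty $\flat$-open $V\subset U$ contains $xW$ with $W=W^{-1}$ a $\flat$-open neighbourhood of $e$; then $W$ is $\tau$-open and $Wx^{-1}\subset U^{-1}$.) So that direction must be rewritten without the base claim.

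In the forward direction your plan coincides with the paper's, but the two points you defer are exactly the substance of the proof, and neither follows from the base claim. First, the inference ``contains $z\cdot UU^{-1}$, hence contains a nonempty $\flat$-open set'' requires the \emph{statement} of Proposition~\ref{sat} (saturated $\Rightarrow$ 2-oscillating), so that $UU^{-1}$ is a $\tau_\flat$-neighbourhood of $e$ and the translate of its $\flat$-interior is the required set; containment of $UU^{-1}$ in $\bigcup_n(\pm U)^n$ gives nothing here. Second, the chain of inclusions you call routine is the heart of the argument and should be written out; the paper's version is: choose $V$ with $V\cdot V\subset U$, use saturation to find $x\in V$ and a neighbourhood $W\subset V$ of $e$ with $x^{-1}W\subset V^{-1}$, hence $W^{-1}x\subset V$ and $WW^{-1}x\subset WV\subset U$. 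With these two repairs your argument becomes the paper's proof; as written, it has a genuine gap.
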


\begin{proof} To prove the  ``if'' part, assume that the collection
of nonempty $\flat$-open subsets forms a $\pi$-base for $G$ and
fix any neighborhood $U$ of the unit $e$ in $G$. Find a
$\flat$-open subset $V\subset U$. We can assume that $V=xW$ for
some $x\in V$ and some $\flat$-open neighborhood $W$ of $e$ with
$W=W^{-1}$. Then $W=W^{-1}$ is an open neighborhood of $e$ in $G$
such that $U\supset V=xW=xW^{-1}$ and thus $Wx^{-1}\subset U^{-1}$
which means that the paratopological group $G$ is saturated.

To prove the ``only if'' part, suppose that $G$ is a saturated
paratopological group. Fix any neighborhood $U\subset G$ of the
unit $e$ of $G$. Find a neighborhood $V\subset G$ of $e$ such that
$V\cdot V\subset U$. Since $V^{-1}$ has nonempty interior in $G$,
there is a point $x\in V$ and a neighborhood $W\subset V$ of the
unit $e$ such that $x^{-1}W\subset V^{-1}$. Then $W^{-1}x\subset
V$ and thus $WW^{-1}x\subset WV\subset U$. By
Proposition~\ref{sat}, the set $WW^{-1}$ is $\flat$-open. Hence
the collection of nonempty $\flat$-open subsets forms a $\pi$-base
for the space $G$.
\end{proof}

Theorem~\ref{pibase} implies that a saturated paratopological
group $G$ and its group reflexion $G^\flat$ have many common
properties (those that can be expressed via $\pi$-bases). In
particular, the spaces $G$ and $G^\flat$ have the same Souslin
number, the same calibers and precalibers, they simultaneously are
(or are not) Baire or quasicomplete and simultaneously satisfy (or
not) many chain conditions considered in \cite{CN} and \cite{AMN}
(such as the properties ccc, productively-ccc, $(*)$, $(**)$, (P),
or $(K_n)$ for $n\ge 2$), see also \cite{BR5}.

We present here only one result of this sort, concerning
precalibers of totally bounded paratopological groups. We remind
that a cardinal $\tau$ is a {\em precaliber\/} of a topological
space $X$ if any collection $\mathcal U$ of nonempty open subsets
of $X$ with $|\mathcal U|=\tau$ contains a centered subcollection
$\mathcal V$ with $|\mathcal V|=\tau$ (a collection $\mathcal V$
being {\em centered\/} if $\cap\mathcal F\ne\emptyset$ for any
finite subcollection $\mathcal F$ of $\mathcal V$). It is easy to
see that a topological space $X$ is countably cellular if
$\aleph_1$ is a precaliber of $X$ (the converse is true under
(MA$+\neg$CH) but is false under (CH), see \cite[p.43]{Ar} and
\cite{AMN}). It is well-known that each cardinal of uncountable
cofinality is a precaliber of any totally bounded topological
group (this follows from the dyadicity of compact topological
groups). This fact and Theorem~\ref{pibase} imply the following
useful result answering the ``paratopological'' version of
Protasov's Problem 6 from \cite{BCGP}.

\begin{corollary} A totally bounded paratopological group $G$ is
countably cellular. Moreover, each cardinal of uncountable
cofinality is a precaliber of $G$.
\end{corollary}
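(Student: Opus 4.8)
The plan is to read the statement off Theorem~\ref{pibase} together with the fact that the group reflexion $G^\flat$ is a totally bounded \emph{topological} group. First I would recall that, by Proposition~2.1 of \cite{Ra2} (quoted above), every totally bounded paratopological group is saturated; hence $G$ is saturated and Theorem~\ref{pibase} applies, giving that the family $\mathcal B$ of nonempty $\flat$-open subsets of $G$ is a $\pi$-base in $G$. Since $\mathcal B\subset\tau_\flat\subset\tau$ and every nonempty $\tau_\flat$-open set is automatically $\tau$-open, the same family $\mathcal B$ is also a $\pi$-base for $G^\flat$; thus $\mathcal B$ is a common $\pi$-base for $G$ and $G^\flat$. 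Because precalibers and the Souslin number of a space depend only on a $\pi$-base (this is precisely the principle invoked in the paragraph preceding the corollary, cf. \cite{BR5}), $G$ and $G^\flat$ have the same precalibers; in particular, a cardinal which is a precaliber of $G^\flat$ is a precaliber of $G$.

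Next I would observe that $G^\flat$ is a totally bounded topological group: the identity map $i\colon G\to G^\flat$ is a continuous surjective group homomorphism, and $\tau_\flat\subset\tau$, so any neighborhood $V$ of the unit in $G^\flat$ is also a neighborhood of the unit in $G$; if $G=F\cdot V$ for a finite set $F$, then (on the same underlying set) $G^\flat=F\cdot V$. Invoking the well-known fact recalled just above — that every cardinal of uncountable cofinality is a precaliber of a totally bounded topological group, a consequence of the dyadicity of compact topological groups — we get that every cardinal of uncountable cofinality is a precaliber of $G^\flat$, hence, by the first paragraph, of $G$. Finally, since $\aleph_1$ has uncountable cofinality, it is a precaliber of $G$, and a space admitting $\aleph_1$ as a precaliber is countably cellular; so $G$ is countably cellular.

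Essentially nothing here is deep — the authors already announce the result as an immediate consequence — and the only point I would take any care over is the transfer of precalibers along the common $\pi$-base $\mathcal B$. For a regular uncountable $\kappa$ this is routine: given nonempty open sets $U_\alpha$ ($\alpha<\kappa$) in $G$, choose $B_\alpha\in\mathcal B$ with $B_\alpha\subset U_\alpha$; if the set $\{B_\alpha:\alpha<\kappa\}$ has fewer than $\kappa$ members, some fibre of $\alpha\mapsto B_\alpha$ has size $\kappa$ and the corresponding $U_\alpha$'s share a common point, and otherwise $\kappa$ many of the $B_\alpha$ are distinct, so applying the precaliber property of $G^\flat$ to them and lifting the centered subfamily back to the $U_\alpha$'s finishes the argument. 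For singular $\kappa$ of uncountable cofinality one runs the same idea through a cofinal family of regular cardinals below $\kappa$, exactly as in the classical treatment of precalibers of dyadic compacta; this bookkeeping is the only step that requires a moment's thought, and it is entirely standard.
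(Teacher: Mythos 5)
Your proposal is correct and follows exactly the route the paper intends: totally bounded implies saturated (Proposition 2.1 of [Ra$_2$]), Theorem~\ref{pibase} gives the common $\pi$-base of $\flat$-open sets shared by $G$ and $G^\flat$, and the well-known precaliber property of totally bounded topological groups then transfers to $G$, with $\aleph_1$ yielding countable cellularity. You merely spell out the routine details (total boundedness of $G^\flat$ and the transfer of precalibers along a common $\pi$-base) that the paper leaves implicit.
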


It is interesting to mention that for any infinite cardinal $\tau$
there is a zero-dimensional totally bounded left-topological group
with Souslin number $\tau$, see \cite{Pr}.
\smallskip

Theorem~\ref{three} implies that any paratopological group $G$
with $1\le T_2(G)<\infty$ satisfies $\osc(G)\ge 4$. In particular,
this concerns countable regular paratopological groups constructed
in Example~\ref{ex0}. Thus paratopological groups with large
oscillation numbers exist. Moreover, such a group can be a
subgroup of a paratopological group with small oscillation number.
(In this context it is interesting to notice that the class of
$\flat$-separated ($\flat$-regular) paratopological groups is
closed with respect to taking subgroups and many other
operations).

\begin{example}\label{ex2} There is a regular countable first-countable
saturated paratopological group $G$ with $\osc(G)=2$ containing a
$\flat$-closed subgroup $H$ with $\osc(G)=\infty$ and failing to
be a paratopological LSIN-group.
\end{example}

\begin{proof} We shall use the result of \cite{BR1} asserting that
a countable first-countable paratopological group $H$ is a
$\flat$-closed subgroup of a $\flat$-regular countable
first-countable saturated paratopological group provided $H$ has a
neighborhood base at the unit, consisting of subsets, closed in
some weaker topology $\sigma$ turning $H$ into a first-countable
topological SIN-group.

Thus to produce the required example it suffices to construct a
countable first-countable group $H$ with $\osc(H)=\infty$
possessing a neighborhood base at the unit, consisting of subsets
closed in some weaker topology $\sigma$ turning $H$ into a
first-countable topological SIN-group.

Consider the free group $F_2$ with two generators $x,y$ and the
unit $e$ and let $FS_2\subset F_2$ be the subsemigroup spanning
the set $\{e,x,y\}$. Let $H=(F_2)^\omega_0$ and $\tau$ be the
paratopology on $H$ generated by the semigroup $FS_2$ as indicated
in Proposition~\ref{constr}, which implies that the
paratopological group $(H,\tau)$ is countable,  first countable
and has infinite oscillation. Besides the topology $\tau$, the
group $H$ carries the weaker topology $\sigma$, induced from the
countable Tychonov power $(F_2)^\omega$ of the discrete group
$F_2$. It is easy to see that $(H,\sigma)$ is a topological
SIN-group and $(H,\tau)$ has a neighborhood base consisting of
$\sigma$-closed neighborhoods.

Applying \cite{BR1}, we conclude that $H$ is a $\flat$-closed
subgroup of a first-countable countable $\flat$-regular saturated
paratopological group $G$. The group $G$, being saturated, is
2-oscillating according to Proposition~\ref{sat}.

Assuming that $G$ is a paratopological LSIN-group, we would get
that so is its subgroup $H$ which is not possible because
$\osc(H)=\infty$, see Proposition~\ref{sat}.
\end{proof}

We saw in Example~\ref{ex11} that a paratopological group needs
not be isomorphic to its mirror paratopological group. Below we
construct a saturated example of this sort.

An automorphism $h:G\to G$ of a group is called an {\em inner
automorphism\/} of $G$ if there is $g\in G$ such that
$h(x)=g^{-1}xg$ for all $x\in G$.

\begin{proposition}\label{inner} Suppose $G$ is a paratopological group such that
any continuous automorphism $H:G^\flat\to G^\flat$ of its group
reflexion is an inner automorphism. The paratopological group $G$
is isomorphic to its mirror paratopological group $G^-$ if and
only if $G$ is a topological group.
\end{proposition}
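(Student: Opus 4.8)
The plan is to establish the nontrivial implication: if $G$ is isomorphic to its mirror group $G^-$, then $G$ is a topological group. The converse is immediate, since if $G$ is a topological group then inversion is continuous, so $\tau^{-1}=\tau$ and $G=G^-$ on the nose. So fix an isomorphism $\phi\colon G\to G^-$ of paratopological groups, that is, an algebraic automorphism of $G$ which is simultaneously a homeomorphism of $(G,\tau)$ onto $(G,\tau^{-1})$.

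The first step is to see that $\phi$ induces a continuous automorphism of the group reflexion $G^\flat$. Since $(\tau^{-1})_\flat=\tau_\flat$ (recorded when the oscillator topologies were introduced), the group reflexion of $G^-$ is literally $(G^-)^\flat=(G,(\tau^{-1})_\flat)=(G,\tau_\flat)=G^\flat$. Now compose the continuous homomorphism $\phi\colon(G,\tau)\to(G,\tau^{-1})$ with the continuous identity map $(G,\tau^{-1})\to(G,(\tau^{-1})_\flat)=(G^-)^\flat$; this gives a continuous homomorphism from $G$ into the topological group $(G^-)^\flat$, so by the universal property of $G^\flat$ it factors through the canonical identity $i\colon G\to G^\flat$, and therefore $\phi$, viewed as the map $\phi\circ i^{-1}\colon G^\flat\to G^\flat$, is continuous. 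Running the same argument with $\phi^{-1}\colon G^-\to G$ in place of $\phi$ (using $(G^-)^-=G$) shows $\phi^{-1}\colon G^\flat\to G^\flat$ is continuous too, so $\phi$ is a (topological) automorphism of $G^\flat$.

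By hypothesis $\phi$ is then an inner automorphism of $G^\flat$, hence an inner automorphism of the abstract group $G$: there is $g\in G$ with $\phi(x)=g^{-1}xg$ for all $x\in G$. The crucial observation is that conjugation by $g$ is automatically a homeomorphism of $(G,\tau)$, being the composition of the left translation $x\mapsto g^{-1}x$ and the right translation $x\mapsto xg$, both of which are self-homeomorphisms of any paratopological group. Thus $\phi$ is simultaneously a homeomorphism $(G,\tau)\to(G,\tau^{-1})$ and a homeomorphism $(G,\tau)\to(G,\tau)$. From the first, $U\in\tau$ iff $\phi(U)=g^{-1}Ug\in\tau^{-1}$, that is, iff $(g^{-1}Ug)^{-1}=g^{-1}U^{-1}g\in\tau$; from the second, $g^{-1}U^{-1}g\in\tau$ iff $U^{-1}\in\tau$. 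Combining, $U\in\tau\iff U^{-1}\in\tau$, so $\tau^{-1}=\tau$, inversion is continuous, and $G$ is a topological group.

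The argument is short, and the proposition's hypothesis is used only once — to replace the abstract isomorphism $\phi$ by an inner automorphism; everything else is formal manipulation with the reflexion and the mirror operation. The point I expect to need the most care, and would write out in full, is precisely the passage to $G^\flat$: the identification $(G^-)^\flat=G^\flat$ together with the verification that $\phi$ descends to a continuous self-map of $G^\flat$, since that is what converts $G\cong G^-$ into a continuous automorphism to which the hypothesis applies.
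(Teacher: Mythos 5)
Your proof is correct and follows essentially the same route as the paper's: pass from the isomorphism $\phi:G\to G^-$ to a continuous automorphism of $G^\flat$ (using $(\tau^{-1})_\flat=\tau_\flat$), invoke the hypothesis to make it inner, and then use the fact that conjugation is a self-homeomorphism of $(G,\tau)$ to conclude $\tau=\tau^{-1}$. The paper phrases the last step as continuity of the identity map $G\to G^-$ via $x=g\,h(x)\,g^{-1}$ rather than comparing the topologies directly, but this is only a cosmetic difference.
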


\begin{proof} The ``if'' part of the theorem is trivial. To prove
the ``only if'' part, suppose that $h:G\to G^-$ is a topological
isomorphism. It follows that $h$ is a continuous automorphism of
the topological group $G^\flat$ and thus $h$ is an inner
automorphism. Find $g\in G$ with $h(x)=g^{-1}xg$ for all $x\in G$.
Then $x=gh(x)g^{-1}$ and hence the identity automorphism $id:G\to
G^-$ is continuous. This means that for any neighborhood $U\subset
G$ of the unit $e$ there is a neighborhood $V\subset G$ of $e$
such that $V\subset U^{-1}$, i.e., the inversion
$(\cdot)^{-1}:G\to G$ is continuous and hence $G$ is a topological
group.
\end{proof}

As usual, under a {\em character\/} on a topological group $G$ we
understand a continuous homomorphism $\chi :G\to\IT$ of $G$ into
the circle $\IT=\{z\in\IC:|z|=1\}$ considered as a multiplicative
subgroup of the complex plane $\IC$. Each character $\chi
:G\to\IT$ induces a topology (called {\em the Sorgenfrey
paratopology\/}) on $G$, whose neighborhood base at a point
$g_0\in G$ consists of the sets $U^+=\{g\in U: \Arg(\chi
(g_0))\le\Arg(\chi(g))<\Arg(\chi (g_0))+\pi\}$ where $U$ runs over
neighborhoods of $g_0$ in $G$ (as usual $\Arg(z)\in[0,2\pi)$
stands for the argument of a complex number $z\ne0$). It is easy
to see that $G$ endowed with the Sorgenfrey paratopology is a
saturated paratopological group. If the subgroup
$\Ker(\chi)=\chi^{-1}(1)$ is not open in $G$, then this
paratopological group fails to be a topological group. This
observation together with Proposition~\ref{inner} imply
\smallskip

\begin{corollary}~\label{char} Let $G$ be a topological group such
that each continuous automorphism of $G$ is inner and let $\chi
:G\to\IT$ be a character whose kernel $\Ker(\chi)$ is not open in
$G$. Suppose that $\tau$ is the Sorgenfrey paratopology on $G$
generated by the character $\chi $. Then the saturated
paratopological group $(G,\tau)$ is not isomorphic to its mirror
paratopological group $(G,\tau^{-1})$.
\end{corollary}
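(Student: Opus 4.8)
The plan is to apply Proposition~\ref{inner} to the paratopological group $(G,\tau)$, so the argument breaks into three parts: (i) checking that $(G,\tau)$ is saturated but is not a topological group, (ii) identifying the group reflexion $(G,\tau)^\flat$ with the original topological group $G$, and (iii) quoting the proposition. Part (i) is essentially the ``observation'' recorded before the statement: each basic $\tau$-neighbourhood $U^+$ of a point is contained in the corresponding original neighbourhood $U$, so the original topology is weaker than $\tau$ and $(G,\tau)$ is a (saturated) paratopological group; and if $(G,\tau)$ were a topological group then inversion would be $\tau$-continuous, i.e.\ $(U^+)^{-1}$ would be a $\tau$-neighbourhood of $e$ for every $\tau$-neighbourhood $U$, which a short computation with $\Arg\circ\chi$ shows forces some original neighbourhood of $e$ to lie inside $\Ker(\chi)$ --- impossible, since $\Ker(\chi)$ is not open.

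The heart of the matter is part (ii). Being saturated, $(G,\tau)$ is $2$-oscillating by Proposition~\ref{sat}, so the sets $UU^{-1}$, with $U$ running over $\tau$-neighbourhoods of $e$, form a neighbourhood base at $e$ of $(G,\tau)^\flat$. Hence it suffices to sandwich each such set between original neighbourhoods of $e$. I would let $U=W^+$ where $W$ is a symmetric original neighbourhood of $e$ chosen small enough that $\chi(W)$ lies in a short symmetric arc $\{z\in\IT:\Arg(z)\in[0,\delta)\cup(2\pi-\delta,2\pi)\}$ about $1$; such $W^+$ still form a $\tau$-base at $e$. Then $W^+=\{g\in W:\Arg\chi(g)\in[0,\delta)\}$ and $(W^+)^{-1}=\{h\in W:\Arg\chi(h)\in\{0\}\cup(2\pi-\delta,2\pi)\}$, and one checks directly that $W\subseteq W^+(W^+)^{-1}\subseteq W^2$: indeed, if $g\in W$ has $\Arg\chi(g)\in[0,\delta)$ then $g=g\cdot e\in W^+(W^+)^{-1}$, while if $\Arg\chi(g)\in(2\pi-\delta,2\pi)$ then $g=e\cdot g\in W^+(W^+)^{-1}$, and the right-hand inclusion is immediate from $W^+\cup(W^+)^{-1}\subseteq W$. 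Since $W$ may be taken inside any prescribed original neighbourhood of $e$, the neighbourhood filter of $e$ in $(G,\tau)^\flat$ coincides with the original one, so $\tau_\flat$ is the original topology and $(G,\tau)^\flat=G$.

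For part (iii): since $(G,\tau^{-1})^\flat=(G,\tau)^\flat=G$ and every continuous automorphism of $G$ is inner by hypothesis, the paratopological group $(G,\tau)$ meets the hypothesis of Proposition~\ref{inner}, which then states that $(G,\tau)$ is isomorphic to its mirror paratopological group $(G,\tau^{-1})$ precisely when $(G,\tau)$ is a topological group. As $(G,\tau)$ is not a topological group, it is not isomorphic to $(G,\tau^{-1})$, which is the assertion. I expect the only genuinely non-routine step to be the identification $(G,\tau)^\flat=G$ --- in particular the inclusion $W\subseteq W^+(W^+)^{-1}$ --- together with the (milder) care needed in part (i) to see that non-openness of $\Ker(\chi)$, rather than mere non-triviality of $\chi$, is what prevents $(G,\tau)$ from being a topological group.
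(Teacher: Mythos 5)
Your proposal is correct and follows the route the paper intends: the corollary is derived from Proposition~\ref{inner} together with the observation that $(G,\tau)$ is saturated but (since $\Ker(\chi)$ is not open) not a topological group. The paper states this implication without further argument, leaving implicit the identification $(G,\tau)^\flat=G$ that is needed to transfer the inner-automorphism hypothesis from $G$ to the group reflexion; your verification of this via $2$-oscillation of saturated groups and the sandwich $W\subseteq W^+(W^+)^{-1}\subseteq W^2$ is exactly the missing detail and is sound.
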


To construct a saturated paratopological group which is not
isomorphic to its mirror paratopological group, it rests to find
an example of a topological group satisfying the conditions of
Corollary~\ref{char}. Many such examples can be found using the
theory of Lie groups and Lie algebras, see \cite{GG}, \cite{VO}.

Probably the simplest example is the Lie group $\Aff^+(\IR)$ of
all orientation-preserving affine transformations of the real
line. This group can be represented by matrices of the form
$\left(\begin{smallmatrix} a&b\\0&1\end{smallmatrix}\right)$ where
$a,b\in\IR$, $a>0$. It is well-known that $\Aff^+(\IR)$ endowed
with the natural locally Euclidean topology is a metabelian Lie
group which is not a SIN-group (see \cite[p.279]{Kel}). It follows
from \cite[p.28]{GG} that any continuous automorphism of the Lie
group $\Aff^+(\IR)$ is inner. The group $\Aff^+(\IR)$ admits a
non-trivial character $\chi :\Aff^+(\IR)\to\IT$ assigning to each
matrix $A=\left(\begin{smallmatrix}
a&b\\0&1\end{smallmatrix}\right)\in\Aff^+(\IR)$ the complex number
$\chi (A)=e^{i\ln a}\in\IT$. This character induces the Sorgenfrey
topology $\tau$ on $\Aff^+(\IR)$ whose neighborhood base at the
unit $E=\left(\begin{smallmatrix}
1&0\\0&1\end{smallmatrix}\right)$ consists of the sets
$U(\varepsilon)=\left\{\left(\begin{smallmatrix}
a&b\\0&1\end{smallmatrix}\right): 1\le a<1+\varepsilon,\;
|b|<\varepsilon\right\}$ where $\varepsilon>0$. Thus it is legal
to apply Proposition~\ref{inner} and Corollary~\ref{char} to get

\begin{example}\label{ex3} The paratopological group $(\Aff^+(\IR),\tau)$
endowed with the Sorgenfrey topology $\tau$ is not isomorphic to
its mirror paratopological group $(\Aff^+(\IR),\tau^{-1})$. Yet,
the paratopological groups $(\Aff^+(\IR),\tau)$ and
$(\Aff^+(\IR),\tau^{-1})$ are saturated paratopological
LSIN-groups but are not paratopological SIN-groups.
\end{example}

\begin{remark} In fact many other Lie groups have the properties
of the group $\Aff^+(\IR)$. In particular, each non-trivial
solvable simply connected Lie group $G$, being a semidirect
product of a closed normal subgroup and a one-dimensional Lie
group, admits a non-trivial character, \cite[p.59]{VO}. If, in
addition, the Killing form of the Lie algebra of $G$ is
non-degenerated, then $G$ all automorphisms of $G$ are inner (see
\cite[\S1.5]{GG}) and thus $G$ admits a regular saturated
paratopology $\tau$ such that the paratopological group $(G,\tau)$
is not isomorphic to its mirror paratopological group
$(G,\tau^{-1})$.
\end{remark}

Finally, let us state some open questions related to the
introduced concepts.

\begin{problem}\begin{enumerate}
\item Is every $(2n+1)$-reversive group $2n$-reversive? {\rm (The
answer is ``yes' for $n=1$)}.
\item Is there an absolutely 2-oscillating group which is not
2-reversive?
\item Is every polycyclic group absolutely $n$-reversive for some
$n\in\IN$?
\item For which $n\in\IN$ there are a group $G$ and a subsemigroup
$S\subset G$ such that $(\mp S)^n\subset (\pm S)^n$ but $(\pm
S)^n\not\subset (\mp S)^n$?
\item Is every regular $\flat$-separated paratopological group
$\flat$-regular?
\item Suppose $G$ is a paratopological LSIN-group. Is the mirror
paratopological group $G^-$ a paratopological LSIN-group?
\item Is it true that for every number $n\ge 1$ there is a ($\flat$-regular)
paratopological group $G$ with $\osc(G)=n$? {\rm (The answer is
``yes'' for $n\le 3$)}.
\item Is there a (regular) Hausdorff paratopological group $G$
such that the numbers $T_2(G)$ and $\osc(G)$ are finite?
\item Is there a paratopological group $G$ whose all oscillator topologies
are Hausdorff, but the group reflexion $G^\flat$ of $G$ is not
separated?
\item Is there a paratopological group $(G,\tau)$ such that
$\tau_\flat\ne \inf \tau_n$?
\item Is there a paratopological group $G$ whose group reflexion
$G^\flat$ carries the antidiscrete topology?
\item Has a $\flat$-regular paratopological group $G$ finite
oscillation if its group reflexion $G^\flat$ is compact?
\end{enumerate}
\end{problem}

\noindent{\bf Acknowledgement.} The authors express their sincere
thanks to I.Yo.Guran and O.D.~Artemovych for valuable and
stimulating discussions on the subject of the paper.
%\newpage

\end{document}